\newtheorem{theorem}{Theorem}
\newtheorem{lemma}[theorem]{Lemma}
\newtheorem{proposition}[theorem]{Proposition}
\newtheorem{remark}[theorem]{Remark}
\newtheorem{corollary}[theorem]{Corollary}
\newtheorem{example}[theorem]{Example}
\newcommand\bB{\mathbf B}
\newcommand\bF{\mathbf F}
\newcommand\bG{\mathbf G}
\newcommand\bW{\mathbf W}
\newcommand\bX{\mathbf X}
\newcommand\bY{\mathbf Y}
\newcommand\bZ{\mathbf Z}
\newcommand\bx{\boldsymbol x}
\newcommand\by{\boldsymbol y}
\newcommand\bz{\boldsymbol z}
\def\bsigma{\boldsymbol{\sigma}}
\def\bxi{\boldsymbol{\xi}}
\newcommand\cL{\mathcal L}
\newcommand\cP{\mathcal P}
\newcommand\cW{\mathcal W}
\newcommand\HH{\mathbb H}
\newcommand\NN{\mathbb N}
\newcommand\RR{\mathbb R}
\newcommand{\x}{\bx}
\newcommand{\X}{\bX}
\newcommand{\Y}{\bY}
\DeclarePairedDelimiter\abs{\lvert}{\rvert}%
\title[Backward propagation of chaos]{Backward propagation of chaos}
\author{Mathieu Lauri\`ere \& Ludovic Tangpi}
\thanks{Princeton University, ORFE; \\  
lauriere@princeton.edu, ludovic.tangpi@princeton.edu.}	
\keywords{Propagation of chaos, BSDE, Mckean-Vlasov BSDE, interacting particles systems, concentration of measure, PDEs on Wassserstein space.}
\date{\today}
\subjclass[2010]{35K58, 35B40, 	60F25, 60J60, 28C20, 60H20.}
\begin{document}

\begin{abstract} 
	This paper develops a theory of propagation of chaos for a system of weakly interacting particles whose \emph{terminal} configuration is fixed as opposed to the initial configuration as customary.
	Such systems are modeled by backward stochastic differential equations.
	Under standard assumptions on the coefficients of the equations, we prove propagation of chaos results and quantitative estimates on the rate of convergence in Wasserstein distance of the empirical measure of the interacting system to the law of a McKean-Vlasov type equation.
	These results are accompanied by non-asymptotic concentration inequalities.
	As an application, we derive rate of convergence results for solutions of second order semilinear partial differential equations to the solution of a partial differential written on an infinite dimensional space.
\end{abstract}

\maketitle


\section{Introduction}
The theory of propagation of chaos takes its origin in the work of M. Kac \cite{Kac} whose initial aim was to investigate particle system approximations of some nonlocal partial differential equations (PDE) arising in thermodynamics.
The intuitive idea is the following: Consider a large number $n$ of (random) particles starting from $n$ given independent and identically distributed random variables and whose respective dynamics interact.
Because there is no deterministic pattern for the starting position of the particles, one says that the \emph{initial configuration is chaotic}.
Kac's insight was that if the interaction between the particles is ``sufficiently weak'' and the particles are ``symmetric'', then as the size of the system increases, there is less and less interaction and in the limit the particles ``become independent''.
That is, the initial chaotic configuration \emph{propagates} over time.
This intuition was put into firm mathematical ground notably by \citet{Mckean67}, \citet{MR1108185} and \citet{Gaertner88} and has generated a rich literature with a variety of fundamental applications.
We refer for instance to \cite{Malrieu03,Lacker18,Bos-Tal,Shkolnikov12,Jabin-Zhang16,Jabir19,Jabin-Wang} for a few recent developments and applications.
In particular, the theory of propagation of chaos has undoubtedly motivated (and benefited from) the more recent and very active theory of mean-field games introduced by~\citet{MR2295621} and~\citet{MR2346927}.

The basic question motivating the present work is to ask whether Kac's intuition carries over to systems of particles with chaotic \emph{terminal} configurations. 
There are numerous such examples, for instance in quantitative finance where different parties independently set investment goals which need to be met at a prescribed future date, but with inter-temporal trading decisions that are correlated.
More precisely, we ask \emph{whether a chaotic terminal configuration will propagate to past configurations} as the size of the system becomes large.
As mentioned above, an important application at the origin of the theory of propagation of chaos is the particle system approximation of some nonlocal PDEs.
We also analyze such an application in the present setting and use the backward propagation of chaos viewpoint to derive a particle system approximation of a semilinear PDE written on an infinite dimensional space (akin to the master equation in the theory of mean-field games). 
The interest here lies in the fact that, being written on a finite dimensional space, the approximating PDEs are much easier to handle analytically.
For instance, well-developed theories of weak solutions and interior estimates for the gradients are available for such equations.
The main idea leading to this approximation result is the probabilistic representation of solutions of some parabolic PDEs, especially due to \citet{ChassagneuxCrisanDelarue_Master}, which allows us to transform the problem of approximating PDE solutions into a purely probabilistic question.

In the present paper, we model \emph{backward particles} by solutions of backward stochastic differential equations (BSDEs) as introduced by \citet{PP90}.
The interaction is through the empirical distribution of the system.
In the main contributions of the paper we derive various convergence results of the $n$-particle system to solutions of McKean-Vlasov BSDEs under classical Lipschitz continuity conditions on the generator and integrability conditions on the terminal value.
The focus is put on deriving explicit, non-asymptotic convergence rates for the empirical measure as well as the processes.
We strengthen our convergence results by deriving concentration inequalities, some of which dimension-free.
All our results are gathered in the next section.
The main result relies on an adaptation of the coupling technique of \citet{MR1108185} and BSDE estimates on the one hand, and arguments from the theory of measure concentration on the other hand, notably results from \citet{T2bsde}.

To the best of our knowledge, only the papers of \citet{Buck-Dje-Li-Peng09} and \citet{Hu-Ren-Yang} touch upon limit results for interacting backward particles.
Both papers consider a particular type of interaction, see Remark~\ref{rem:special-case-linear} and Example~\ref{ex:convolution-interaction} for details.
In \cite{Buck-Dje-Li-Peng09}, a convergence rate for the interacting processes  to the McKean-Vlasov equation is derived; we recover their result by a different argument based on functional inequalities for BSDEs.
In \cite{Hu-Ren-Yang} (where the term ``backward propagation of chaos'' is first coined) a convergence result for the empirical measure of the interacting particles is obtained.
However, nothing is said concerning the rate of convergence.
Another somewhat related article is the work by \citet{Bri-Car-deRaynal-hu19} on the approximation of BSDEs with normal constraints in law.

The ideas and results of the present paper are also connected to the theory of mean field games, which has recently attracted a surge of interest.
In fact, BSDEs of mean-field type arise naturally in optimality conditions for mean field games (MFG) with interactions through the controls, which are sometimes referred to as ``extended MFG'' or ``MFG of controls'' and have been introduced by~\citet{MR3160525}. Such models are particularly relevant in economics and finance, cf. e.g.~\citet{MR3359708,MR3805247}. 
The connection with mean-field BSDEs stems from Pontryagin's maximum principle and has been stressed by~\citet[Section 4.7.1]{MR3752669} and more recently by~\citet{acciaio2018extended}. 
A more extensive discussion on the applications of our results to large population games and mean-field games will be considered elsewhere.

Concerning the approximation problem of PDEs on the Wasserstein space by PDEs on finite dimensional Euclidean spaces, let us mention that a similar question was first analyzed by \citet{carda15} (see also \citet{Las-Lio06,Cardaliaguet17}) based on PDE estimations they derive for the finite dimensional system.
Their results concern the \emph{quasilinear} form of the master equation.
Our contribution here is mainly methodological, as we obtain a convergence result by purely probabilistic techniques.
However, our setting also differs from that of \cite{carda15,Cardaliaguet17} in a number of ways, the most important difference being the type of nonlinearities in the measure argument that we consider.

In the rest of the paper, we dedicate Section \ref{sec:results} to the presentation of the precise setting of the work and its main results.
The proofs are postponed to Section \ref{sec:proofs}.

\section{Setting and main results}
\label{sec:results}
\subsection{Setting and Notation}
\label{subsec:notations}
Let $d,m \in \mathbb{N}$ be fixed. 
Unless otherwise specified, $\RR^d, \RR^m$ and $\RR^{d \times m}$ are endowed with the Euclidean norm denoted by $|\cdot|$ in all cases. Let us denote by $\Omega:= C([0,T],\RR^d)$ the space of continuous functions from $[0,T]$ to $\RR^d$, by $P$ the Wiener measure on $\Omega$ and by $W$ the canonical process given by $W_t(\omega)= \omega(t)$. 
As usual, equalities and inequalities between random variables will be understood to hold up to null sets of the Wiener measure.
It is well-known that $W$ is a $P$-Brownian motion. 
Let $W^1, \dots, W^n$ be $n$ independent copies of $W$ and denote by ${\mathcal{F}}^n:=({\mathcal{F}}_t^n)_{t\in[0,T]}$  the completion of the raw filtration of $W^1, \dots, W^n$. 
Let us equip $\Omega$ with the filtration ${\mathcal{F}}^n$.
We will always use the identification
\begin{equation*}
	W \equiv W^1 \quad \text{and}\quad {\mathcal{F}} \equiv {\mathcal{F}}^1.
\end{equation*}

Given a vector $\x := (x_1, \dots, x_n)\in (\mathbb{R}^m)^n$, denote by 
\begin{equation*}
	L^n(\x) := \frac 1n \sum_{k=1}^n\delta_{x_k}
\end{equation*}
the empirical measure associated to $\x$. 
Then, $L^n(\x) \in {\mathcal{P}}_p(\mathbb{R}^m)$, the set of probability measures on $\mathbb{R}^m$ with finite $p^{th}$ moment.
Let us be given a function $F:[0,T]\times\Omega\times \mathbb{R}^m\times \mathbb{R}^{m\times d}\times {\mathcal{P}}_2(\mathbb{R}^m) \to \mathbb{R}^m$, and a family of ${\mathcal{F}}_T$-measurable i.i.d. random variables $G^1, \dots, G^n$.
We are interested in the asymptotic behavior (as $n$ becomes large) of a family of weakly interacting processes $(Y^{1,n}, \cdots, Y^{n,n})$ evolving backward in time and given by 
\begin{equation}
\label{eq:bsde system}
	Y^{i,n}_t = G^i +\int_t^T F_u(Y^{i,n}_u, Z^{i,i,n}_u, L^n(\Y_u))\,du - \sum_{k=1}^n\int_t^TZ^{i,k,n}_u\,dW^k_u,\quad i = 1,\dots, n,
\end{equation}
where we used the notation $\Y := (Y^{1,n},\dots, Y^{n,n})$. 
Here as well as in the remainder of the article, we assume that for every $(y, z, \mu) \in \mathbb{R}^m\times \mathbb{R}^{m\times d}\times {\mathcal{P}}_2(\mathbb{R}^m)$ the stochastic process $F(\cdot, \cdot, y, z, \mu): (t, \omega) \mapsto F(t, \omega, y, z,\mu)$ is progressively measurable.
In analogy to weakly interacting particles evolving forward in time, in the limit, the above family will be intrinsically linked to the so-called McKean-Vlasov BSDE
\begin{equation}
\label{eq:MkV BSDE}
	Y_t^i = G^i +\int_t^T F_u( Y^i_u, Z^i_u, \cL(Y^i_u))\,du - \int_t^TZ^i_u\,dW^i_u .
\end{equation}
Hereby (and henceforth) $\cL(X)$ denotes the law of the random variable $X$ with respect to the probability measure $P$.
Since under our assumptions on $F$ and $G^i$ the processes $(Y^i)_i$ will be i.i.d., we will often omit the superscript $i$ and simply write $\cL(Y)$ for the law of $Y^i$.

We equip the space ${\mathcal{P}}_p(\mathbb{R}^m)$ with the $p^{\text{th}}$ order Wasserstein distance denoted by ${\mathcal W}_p$ and defined as
\begin{equation*}
	{\mathcal W}_p(\mu, \nu) := \inf\left\{\int_{\mathbb{R}^m\times \mathbb{R}^m}|x-y|^p\,d\pi \right\}^{1/p}
\end{equation*}
where the infimum is over probability measures $\pi$ on $\mathbb{R}^m\times \mathbb{R}^m$ with first and second marginals $\mu$ and $\nu$, respectively.
Given $p \in [1,2]$, we will often consider the condition 
\begin{itemize}
	\item[(Lip$_p$)] The function $F$ is $L_F$-Lipschitz continuous and of linear growth in the sense that there is a constant $L_F \ge 0$ such that, 
	\begin{equation*}
		|F_t(y,z,\mu) - F_t(y',z',\mu')|\le L_F \left(|y-y'| + |z-z'|+ {\mathcal W}_p(\mu, \mu') \right)
	\end{equation*}
	and
	\begin{equation*}
		|F_t(y,z,\mu)| \le L_F\left(1 + |y| + |z| + \Big(\int_{\RR^d}|x|^p\,d\mu \Big)^{1/p} \right)
	\end{equation*}
	for all $t \in [0,T]$, $y,y' \in \mathbb{R}^m$, $z,z'\in \mathbb{R}^{m\times d}$ and $\mu, \mu' \in {\mathcal{P}}_p(\mathbb{R}^m)$.
\end{itemize}
\begin{remark}
\label{rem:existence}
	Note at once that under condition (Lip$_p$), and if $G^i$ has a finite second moment, i.e. $E[|G^i|^2]<\infty$, then the equations \eqref{eq:bsde system} and $\eqref{eq:MkV BSDE}$ admit unique, square integrable solutions. See the beginning of Section \ref{sec:proofs} for details.
\end{remark}
Throughout, we denote by $\Y$ the value process of the solution of \eqref{eq:bsde system} and by $Y$ that of \eqref{eq:MkV BSDE}, say with $i=1$.

Having made precise the probabilistic setting governing the paper, let us now presents its main results.
Most of them pertain to the limiting behavior of $Y^{i,n}$.
As explained in the introduction, we also deduce approximation of parabolic PDEs on the Wasserstein space.
The focus is put on quantitative (i.e. non-asymptotic) estimations of convergence rates.
All proofs are postponed to Section \ref{sec:proofs}.

\subsection{Convergence of empirical distributions}
\label{sec:quant chaos}
We start by showing that the empirical distribution $L^n(\Y_t)$ of the system converges to the law $\cL(Y_t)$ of the McKean-Vlasov BSDE.
\begin{theorem}
\label{thm:mom bound}
	Let $p \in [1,2]$. Assume that $E[|G^i|^k]<\infty$ for some $k$ such that $k>p$ and $k\ge2$, and that $F$ satisfies (Lip$_p$).
	Then it holds that
	\begin{equation}
	\label{eq:mom bound}
		E\left[{\mathcal W}_p^p(L^n(\Y_t),\cL(Y_t)) \right] \le C r_{n,m,q,p}, \qquad \forall t \in [0,T],
	\end{equation}
			with
	\begin{equation}
	\label{eq:def_r-nmqp}
		r_{n,m,q,p}:=\begin{cases}
			n^{-1/2} + n^{-(q-p)/q}, 	&\text{if } p>m/2 \text{ and } q\neq 2p\\
			n^{-1/2}\log(1+n) + n^{-(q-p)/q}, 	&\text{if } p = m/2 \text{ and } q \neq 2p\\
			n^{-p/m} + n^{-(q-p)/q}, 	&\text{if } p\in(0,m/2) \text{ and } q\neq d/(m-p)
		\end{cases}
	\end{equation}
	for all $p<q<k$,
	and for some constant $C$ depending on $T, m, L_F, p,k$ and  $E[|G^i|^k]$. 
\end{theorem}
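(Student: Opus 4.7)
\medskip
\noindent\textbf{Proof plan.}
The plan is the classical Sznitman coupling argument, adapted to the backward setting. Introduce $n$ independent copies $Y^1,\dots,Y^n$ of the McKean–Vlasov BSDE~\eqref{eq:MkV BSDE} driven by the very same Brownian motions $W^1,\dots,W^n$ and terminal data $G^1,\dots,G^n$ that appear in the interacting system~\eqref{eq:bsde system}; these exist and are square-integrable by the hypotheses on $F$ and $G^i$ (Remark~\ref{rem:existence}). The $Y^i$ are i.i.d.\ with the same law as $Y$. By the triangle inequality for ${\mathcal W}_p$ and the inequality $(a+b)^p\le 2^{p-1}(a^p+b^p)$ (valid for $p\ge 1$), it suffices to estimate the two pieces
\begin{equation*}
	{\mathcal W}_p^p(L^n(\Y_t),\cL(Y_t))
	\le 2^{p-1}\bigl[{\mathcal W}_p^p(L^n(\Y_t),L^n(Y_t))+{\mathcal W}_p^p(L^n(Y_t),\cL(Y_t))\bigr].
\end{equation*}
The second term is the standard empirical-to-law error for i.i.d.\ variables: combining the Fournier–Guillin quantitative estimate with the $L^k$ moment bounds on $Y^i_t$ inherited from $E[|G^i|^k]<\infty$ through standard BSDE moment estimates, one gets $E[{\mathcal W}_p^p(L^n(Y_t),\cL(Y_t))]\le C\,r_{n,m,q,p}$ for every $p<q<k$, which is the desired rate.

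The first term is handled pathwise via the trivial coupling of $L^n(\Y_t)$ and $L^n(Y_t)$: ${\mathcal W}_p^p(L^n(\Y_t),L^n(Y_t))\le \frac{1}{n}\sum_{i=1}^n|Y^{i,n}_t-Y^i_t|^p$. Exchangeability of the system then reduces everything to controlling $E[|\Delta Y^1_t|^p]$ with $\Delta Y^i:=Y^{i,n}-Y^i$. Write the BSDE for $\Delta Y^1$, linearize the driver difference by inserting intermediate arguments as
\begin{equation*}
	F_u(Y^{1,n}_u,Z^{1,1,n}_u,L^n(\Y_u))-F_u(Y^1_u,Z^1_u,\cL(Y^1_u))
	= \alpha_u\,\Delta Y^1_u+\beta_u\,\Delta Z^{1,1}_u+\gamma_u,
\end{equation*}
where $\alpha,\beta$ are bounded by $L_F$ and $|\gamma_u|\le L_F\,{\mathcal W}_p(L^n(\Y_u),\cL(Y^1_u))$. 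An $L_F$-bounded Girsanov change of measure eliminates $\beta_u\,\Delta Z^{1,1}_u$, taking a conditional expectation kills the (remaining) martingale part, and a Gronwall argument in the resulting linear BSDE together with a Jensen / H\"older step (and switch back to $P$, which only costs a constant since the Radon–Nikodym density and its inverse are bounded) yields the $L^p$ stability estimate
\begin{equation*}
	E[|\Delta Y^1_t|^p]\le C\int_t^T E\bigl[{\mathcal W}_p^p(L^n(\Y_u),\cL(Y^1_u))\bigr]\,du.
\end{equation*}

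Finally, insert the triangle inequality $E[{\mathcal W}_p^p(L^n(\Y_u),\cL(Y^1_u))]\le 2^{p-1}\bigl(E[|\Delta Y^1_u|^p]+E[{\mathcal W}_p^p(L^n(Y_u),\cL(Y^1_u))]\bigr)$ into the previous display, bound the second summand by $C\,r_{n,m,q,p}$ using Fournier–Guillin, and apply Gronwall's lemma to close the estimate: $E[|\Delta Y^1_t|^p]\le C\,r_{n,m,q,p}$. Combining with the first decomposition gives the announced bound. The main obstacle is the BSDE stability step: one has to obtain an $L^p$ estimate (with $p\in[1,2]$) in which only ${\mathcal W}_p^p$ of the measure argument appears, so that the Fournier–Guillin rate is preserved; working directly in $L^2$ would produce a ${\mathcal W}_p^2$ term which is not directly controlled at the sharp rate $r_{n,m,q,p}$, hence the need for the linearization plus Girsanov device rather than a naive Itô-on-$|\Delta Y|^2$ computation.
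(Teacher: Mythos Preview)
Your overall architecture (Sznitman coupling with i.i.d.\ copies of the McKean--Vlasov solution, linearization of the driver plus Girsanov to absorb the $\Delta Z$ term, Gronwall, and Fournier--Guillin for the i.i.d.\ empirical measure) is exactly the paper's approach. There is, however, one genuine gap.

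You write that the switch back from $Q$ to $P$ ``only costs a constant since the Radon--Nikodym density and its inverse are bounded''. This is false: with $|\gamma_u|\le L_F$, the stochastic exponential ${\mathcal E}_{0,T}(\gamma\cdot W^1)$ has moments of all orders but is \emph{not} bounded (nor is its inverse). Consequently, your step
\[
E_P\bigl[|\Delta Y^1_t|^p\bigr]\;\le\;C\int_t^T E_P\bigl[{\mathcal W}_p^p(L^n(\Y_u),\cL(Y_u))\bigr]\,du
\]
is not justified as written: once you pass from $E_Q[\,\cdot\,|{\mathcal F}^n_t]$ back to $E_P$, a H\"older step forces a strictly higher moment of ${\mathcal W}_p$ on the right-hand side, and the Gronwall loop in $E_P$ does not close at order $p$.

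The paper sidesteps this entirely by running the Gronwall argument \emph{pathwise} on the quantity ${\mathcal W}_p(L^n(\Y_t),\cL(Y_t))$ itself (Lemma~\ref{lem: fundamental lemma}). After the per-particle linearization and a.s.\ Gronwall under $Q$, one bounds ${\mathcal W}_p(L^n(\Y_t),L^n(\tilde\Y_t))$ pathwise by the \emph{same} conditional $Q$-expectation of $\int_0^T{\mathcal W}_p(L^n(\Y_u),\cL(Y_u))\,du$, uses the triangle inequality, and applies Gronwall once more---still a.s.---to obtain
\[
{\mathcal W}_p(L^n(\Y_t),\cL(Y_t))\;\le\;\exp\!\bigl(Te^{L_FT}\bigr)\,{\mathcal W}_p(L^n(\tilde\Y_t),\cL(Y_t))\qquad P\text{-a.s.}
\]
Only at this point does one take $E_P$ of both sides and invoke Fournier--Guillin on the right; no $Q\!\to\!P$ conversion inside the Gronwall loop is ever needed. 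Reorganizing your argument along these lines (pathwise Gronwall first, expectation last) removes the gap and gives the stated rate.
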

It is well-known that the Wasserstein topology is much stronger than the weak topology.
Thus, Theorem \ref{thm:mom bound} shows, in particular, that the sequence of (random) measures $(L^n(\Y_t))_n$ converges to the (deterministic) measure $\cL(Y_t)$ in the weak topology.
This can be seen as a type of quantitative law of large numbers.
As a direct application we obtain the following strong law of large numbers for the sequence $Y^{i, n}$.	
\begin{corollary}
\label{corlln}
	Let $p \in [1,2)$. Assume that $E[|G^i|^2]<\infty$ and that $F$ satisfies (Lip$_p$).
	Then we have the $L^1(\Omega, P)$-limit
	\begin{equation*}
		\lim_{n\to \infty}\frac1n \sum_{i=1}^nY^{i,n}_t  = E[Y_t]\quad \text{for every}\quad t.
	\end{equation*}
\end{corollary}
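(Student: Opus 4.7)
The plan is to reduce the statement to Theorem \ref{thm:mom bound} via Kantorovich--Rubinstein duality. First I would observe that
\begin{equation*}
	\frac{1}{n}\sum_{i=1}^n Y^{i,n}_t - E[Y_t] = \int_{\mathbb{R}^m} x \, L^n(\Y_t)(dx) - \int_{\mathbb{R}^m} x \, \mathcal{L}(Y_t)(dx),
\end{equation*}
so it suffices to control the difference of integrals of the identity map against these two measures. Since each coordinate projection $x \mapsto x_j$ is $1$-Lipschitz, Kantorovich--Rubinstein duality yields
\begin{equation*}
	\left| \frac{1}{n}\sum_{i=1}^n Y^{i,n}_t - E[Y_t] \right| \le \sqrt{m}\, \mathcal{W}_1(L^n(\Y_t),\mathcal{L}(Y_t)) \le \sqrt{m}\, \mathcal{W}_p(L^n(\Y_t),\mathcal{L}(Y_t)),
\end{equation*}
where the last inequality uses $p \ge 1$ so that $\mathcal{W}_1 \le \mathcal{W}_p$.

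Next, I would take expectations and apply Jensen's inequality (since $p \ge 1$) to obtain
\begin{equation*}
	E\left[ \left| \frac{1}{n}\sum_{i=1}^n Y^{i,n}_t - E[Y_t] \right| \right] \le \sqrt{m}\, \left( E\bigl[\mathcal{W}_p^p(L^n(\Y_t),\mathcal{L}(Y_t))\bigr] \right)^{1/p}.
\end{equation*}
Under the assumption $E[|G^i|^2]<\infty$ and (Lip$_p$) with $p\in[1,2)$, the hypotheses of Theorem \ref{thm:mom bound} are met with $k=2$, which is strictly greater than $p$ and at least $2$. Moreover, since $p<2$ and the interval $(p,2)$ is nonempty, we may pick some $q\in(p,2)$ avoiding the finitely many forbidden values $2p$ and $d/(m-p)$; for such $q$ every term in \eqref{eq:def_r-nmqp} vanishes as $n\to\infty$, hence $r_{n,m,q,p}\to 0$.

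Combining the two displays then gives
\begin{equation*}
	E\left[ \left| \frac{1}{n}\sum_{i=1}^n Y^{i,n}_t - E[Y_t] \right| \right] \le \sqrt{m}\,(C\, r_{n,m,q,p})^{1/p} \longrightarrow 0,
\end{equation*}
which is precisely the claimed $L^1(\Omega, P)$ convergence. Note that $E[Y_t]$ is finite because (Lip$_p$) together with $E[|G^1|^2]<\infty$ gives square integrability of $Y_t$, as recalled in Remark \ref{rem:existence}. There is no real obstacle here beyond verifying that an admissible exponent $q$ can always be chosen; the result is essentially a corollary in the literal sense, packaging Theorem \ref{thm:mom bound} through the standard fact that weak convergence of mean-one moments follows from Wasserstein convergence.
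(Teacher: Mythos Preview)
Your proposal is correct and follows essentially the same approach as the paper: both invoke Kantorovich--Rubinstein duality to pass from convergence in $\mathcal W_1$ (and hence $\mathcal W_p$) to convergence of means, and then appeal to Theorem~\ref{thm:mom bound} with $k=2$ and some admissible $q\in(p,2)$. Your version is in fact slightly more careful than the paper's, since you handle the vector-valued case via coordinate projections (picking up the harmless factor $\sqrt m$) and you explicitly verify that an admissible $q$ exists and that $r_{n,m,q,p}\to 0$.
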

\begin{proof}
 	By the Kantorovich-Rubinstein duality, we have
	\begin{equation*}
	E\left[|\int_{\mathbb{R}^m}f(y)d\,L^n(\Y_t)(y) - \int_{\mathbb{R}^m}f(y)\,d \cL(Y_t)(y)| \right] \le E\left[ {\mathcal W}_1(L^n(\Y_t), \cL(Y_t)) \right] \le Cr_{n,m,q,p}
	\end{equation*}
	for some $p<q<2$ and for every $1$-Lipschitz function $f:\mathbb{R}^m\to \mathbb{R}$.
	In particular, taking $f(x)=x$ yields the result.
\end{proof}

\begin{remark}
\label{rem: r vs m+5}
	Under a stronger integrability condition, namely that $E[|G^i|^k]<\infty$ for some $k>m+5$, the argument of the above theorem allows to obtain the bound
	\begin{equation}
	\label{eq:estim Karandikar}
			E\left[{\mathcal W}^p_p(L^n(\Y_t), \cL(Y_t)) \right]\le Cn^{-p/(m+4)} \quad \text{for all } (t,n)\in[0,T]\times\mathbb{N}
	\end{equation}
	 for some constant $C$ depending only on $T, m, L_F, G$ and $E[|G^i|^{k}]$.
\end{remark}
The estimates \eqref{eq:mom bound} and \eqref{eq:estim Karandikar} are uniform in time in the sense that the convergence rate is time-independent, but the supremum (in $t$) can be taken only outside the expectation on the left hand side.
A stronger uniform estimate can be obtained at the cost of also stronger integrability conditions and a worse convergence rate.
\begin{proposition}
\label{prop:mom bound sup}
	Assume that $E[|G^i|^k]<\infty$ for some $k>m+5$, and that $F$ satisfies (Lip$_p$) for some $p \in [1,2]$.
	Further assume that
	 the solution $(Y,Z)$ of \eqref{eq:MkV BSDE} is such that $\sup_{t \in [0,T]} E[|Z_t|^{2k}]< \infty$.
	Then it holds
	\begin{equation}
	\label{eq:sup mom bound}
		E\Big[\sup_{t\in[0,T]}{\mathcal W}^p_p(L^n(\Y_t), \cL(Y_t)) \Big]\le Cn^{-p/{(m+8)}} \quad \text{for all } n\in \mathbb{N}, \text{ and } p\in [1,2]
	\end{equation}
	for some constant $C$ depending on $T,L_F, k, E[|G^i|^{k}]$ and $\sup_{t \in [0,T]} E[|Z_t|^{2k}]$.
\end{proposition}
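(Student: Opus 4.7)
The plan is to combine the synchronous coupling used in the proof of Theorem~\ref{thm:mom bound} with a time discretization of $[0,T]$, crucially exploiting the added integrability $\sup_{t}E[|Z_t|^{2k}]<\infty$ to control the oscillations of the limit BSDE uniformly in~$t$.

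Let $(Y^i,Z^i)_{i=1,\dots,n}$ be independent copies of the solution of \eqref{eq:MkV BSDE} driven by $W^1,\dots,W^n$. Pairing the particles index-wise in an empirical coupling gives ${\mathcal W}_p^p(L^n(\Y_t),L^n(Y_t))\le \frac{1}{n}\sum_i |Y^{i,n}_t-Y^i_t|^p$, and the triangle inequality in ${\mathcal W}_p$ yields
\begin{equation*}
E\Big[\sup_{t\in[0,T]}{\mathcal W}_p^p(L^n(\Y_t),\cL(Y_t))\Big]\le C\,A_n+C\,B_n,
\end{equation*}
where $A_n:=E\big[\sup_t \frac{1}{n}\sum_{i=1}^n|Y^{i,n}_t-Y^i_t|^p\big]$ and $B_n:=E\big[\sup_t{\mathcal W}_p^p(L^n(Y_t),\cL(Y_t))\big]$. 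The term $A_n$ I would handle exactly as in the proof of Theorem~\ref{thm:mom bound}: It\^o's formula on $|Y^{i,n}-Y^i|^2$, the Lipschitz hypothesis, Burkholder--Davis--Gundy, Young's inequality and Gronwall yield, after averaging over $i$ by exchangeability, a bound of the form $A_n\le C\int_0^T E[{\mathcal W}_p^p(L^n(Y_u),\cL(Y_u))]\,\diff u\le Cn^{-p/(m+4)}$ via Remark~\ref{rem: r vs m+5}.

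The main obstacle is controlling $B_n$, since pointwise Fournier--Guillin-type bounds do not immediately give uniform-in-time rates. I would partition $[0,T]$ into $N$ intervals of length $\delta=T/N$ with endpoints $0=t_0<\cdots<t_N=T$, and decompose, for $t\in[t_{j-1},t_j]$,
\begin{equation*}
{\mathcal W}_p(L^n(Y_t),\cL(Y_t))\le {\mathcal W}_p(L^n(Y_t),L^n(Y_{t_j}))+{\mathcal W}_p(L^n(Y_{t_j}),\cL(Y_{t_j}))+{\mathcal W}_p(\cL(Y_{t_j}),\cL(Y_t)).
\end{equation*}
For the first and third (path-oscillation) terms, the BSDE identity $Y^i_s-Y^i_{t_j}=-\int_{t_j}^{s}F_u\,\diff u+\int_{t_j}^{s}Z^i_u\,\diff W^i_u$ combined with Burkholder--Davis--Gundy at exponent $2k$ and the hypothesis $\sup_u E[|Z_u|^{2k}]<\infty$ gives $E\big[\sup_{s\in[t_{j-1},t_j]}|Y^i_s-Y^i_{t_j}|^{2k}\big]\le C\delta^{k}$; combining with a union bound over $j$ and Jensen yields control by a constant times $N^{p/(2k)}\delta^{p/2}$. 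For the middle, grid-point term, Chebyshev together with a higher-moment version of Remark~\ref{rem: r vs m+5} yields $E\big[\max_{j\le N}{\mathcal W}_p^p(L^n(Y_{t_j}),\cL(Y_{t_j}))\big]\le CN^{1/r}n^{-p/(m+4)}$ for any $r$ with $pr<k$.

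Finally, I would optimize $N$ to balance the two contributions. Choosing $r=(m+4)k/(2p(k-1))$ and $N\sim n^{2/(m+8)}$ balances the path-oscillation bound $N^{p/(2k)}\delta^{p/2}$ against the grid-point bound $N^{1/r}n^{-p/(m+4)}$; both become of order $n^{-p/(m+8)}$, yielding $B_n\le Cn^{-p/(m+8)}$. Since $A_n\le Cn^{-p/(m+4)}$ is dominated by this rate, the announced estimate follows. The delicate step is the grid-point term, where avoiding a linear factor $N$ in the maximum requires higher-moment bounds for ${\mathcal W}_p^p(L^n(Y_{t_j}),\cL(Y_{t_j}))$; this is precisely where the strengthened assumptions $E[|G^i|^k]<\infty$ with $k>m+5$ (to make Fournier--Guillin applicable at the exponent $pr$) and $\sup_t E[|Z_t|^{2k}]<\infty$ (to control the modulus of continuity of $Y$) are jointly used.
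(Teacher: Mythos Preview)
Your route differs substantially from the paper's, and the crucial middle step has a real gap. The paper does not discretize time and optimize over the mesh. Instead, it invokes the pathwise coupling inequality of Lemma~\ref{lem: fundamental lemma}, which holds $P$-a.s.\ for every $t$, so one may take the supremum inside and obtain directly
\[
E\Big[\sup_{t}{\mathcal W}_p^p(L^n(\Y_t),\cL(Y_t))\Big]\le C\,E\Big[\sup_{t}{\mathcal W}_p^p(L^n(\tilde\Y_t),\cL(Y_t))\Big],
\]
with no $A_n/B_n$ split at all. For the i.i.d.\ term on the right the paper then applies, as a black box, the uniform-in-time result \cite[Theorem~1.3]{Hor-Kar94}, which furnishes exactly the rate $n^{-2/(m+8)}$ for $E[\sup_t{\mathcal W}_2^2(L^n(\tilde\Y_t),\cL(Y_t))]$ once certain Kolmogorov-type increment bounds on the process $Y$ are verified. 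Those increment bounds are precisely the content of Lemma~\ref{lem:estimation difference}, and this is the sole place where $\sup_t E[|Z_t|^{2k}]<\infty$ enters. The partition into subintervals of length $\le 1$ is only a technical normalization allowing Lemma~\ref{lem:estimation difference} to apply; it is not optimized.

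The gap in your argument is the sentence ``Chebyshev together with a higher-moment version of Remark~\ref{rem: r vs m+5} yields $E[\max_{j}{\mathcal W}_p^p(\cdot)]\le CN^{1/r}n^{-p/(m+4)}$ for any $r$ with $pr<k$.'' This requires a bound of the form $E[{\mathcal W}_p^{pr}(L^n(Y_{t_j}),\cL(Y_{t_j}))]\le Cn^{-pr/(m+4)}$ for $pr=(m+4)k/(2(k-1))$, which can be considerably larger than $2$; neither \cite{Fou-Gui15} nor \cite[Theorem~1.2]{Hor-Kar94} as quoted in the paper deliver this, and the constraint $pr<k$ alone does not secure it (the Horowitz--Karandikar moment hypothesis is dimension-dependent, not merely $k>pr$). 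If one falls back on the crude bound $E[\max_j X_j]\le N\max_j E[X_j]$, your optimization gives a rate no better than $n^{-p/(2m+8)}$, strictly worse than $n^{-p/(m+8)}$. So either you must justify the higher-moment Wasserstein estimate from scratch---which essentially amounts to reproving the Horowitz--Karandikar uniform theorem---or you should invoke \cite[Theorem~1.3]{Hor-Kar94} directly after checking its hypotheses via Lemma~\ref{lem:estimation difference}, as the paper does.
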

\begin{remark}
	The assumption $\sup_tE[|Z_t|^{2k}]< \infty$ is by no means a restrictive one, since it has been shown to hold in many classical cases.
	For instance, when $G^i=G(W_T^i)$ for a bounded and Lipschitz continuous function $G$, and $F_t(\cdot,\cdot,\mu)$ is differentiable for all $(t,\mu)$, then it holds $E[\sup_{t \in [0,T]}|Z_t|^{2k}]<\infty$ for all $k\ge1$, see \cite[Theorem 5.3]{dReis-Imk10}.
	Alternatively, under conditions on the Malliavin differentiability of $G$ and $F$, it can be shown that $Z$ is even bounded, see \cite{Che-Nam,Kup-Luo-Tang18} for details.
	The results of these papers apply for instance when $G$ is Lipschitz continuous on the path space equipped with the supremum norm and $F$ is deterministic.
	In this case, the integrability condition on $G$ also follows.
\end{remark}

\subsection{Concentration estimates}

Given two probability measures $Q_1$ and $Q_2$ on $\Omega$, let us denote the $p^{\text{th}}$ order Wasserstein distance on $\Omega$ equipped with the supremum norm by
\begin{equation*}
	{\mathcal W}_{p,||\cdot||_\infty}(Q_1,Q_2):= \inf\left\{\int_{\Omega\times \Omega}\sup_{t \in [0,T]}|\omega_1(t) - \omega_2(t)|^p\,d\pi(\omega_1, \omega_2) \right\}^{1/p}
\end{equation*}
where the infimum is over probability measures $\pi$ on $\Omega\times \Omega$ with first and second marginals $Q_1$ and $Q_2$, respectively.
The following result gives concentration estimates for the interacting family $\Y$.
We consider concentration for the time $t$ marginal as well as for the law of the entire process.
\begin{theorem}
\label{thm:concentration}
	Let $p \in [1,2]$.
	Assume that $E[|G^i|^k]<\infty$ for some $k>2p$, and that $F$ satisfies (Lip$_p$).
	Then it holds that, for all $ \varepsilon \in (0, \infty)$  and $\varepsilon_{F,T} := \varepsilon / \exp(Te^{L_FT})$,
	\begin{equation}
	\label{eq:conent bound}
		P\left({\mathcal W}_p(L^n(\Y_t), \cL(Y_t)) \ge \varepsilon\right)\le C\big( a_{n, \varepsilon_{F,T}}1_{\{\varepsilon_{F,T}\le 1\}} + b_{n,k,\varepsilon_{F,T}} \big)
	\end{equation}
	with $b_{n,k,\varepsilon} := n(n\varepsilon)^{-(k-\delta)/p}$ and
	\begin{equation*}
		a_{n,\varepsilon} := \begin{cases}
			\exp(-cn\varepsilon^2),	& \text{if } p>m/2\\
			\exp(-c n(\varepsilon/\log(2+1/x))^2),	& \text{if } p=m/2\\
			\exp(-cn\varepsilon^{m/p}), 	& \text{if } p \in (0,m/2)
		\end{cases}
	\end{equation*}
	for three positive constants $\delta \in (0,k)$, $C$ and $c$ depending on $p,m,k$, $T$, $L_F$ and $E[|G^i|^{k}]$.

	Moreover, if the functions $F_t$  and $G^i$ are also Lipschitz continuous as functions on $(\Omega,||\cdot||_\infty)$, that is,
	\begin{align*}
	 	|F_t(\omega,y,z,\mu) - F_t(\omega',y',z',\mu')|&\le L_F\left(||\omega-\omega'||_\infty + |y-y'| + |z-z'|+ {\mathcal W}_p(\mu, \mu') \right)\\
	 	\text{and} \quad |G^i(\omega) - G^i(\omega')|& \le L_G||\omega - \omega'||_\infty,
	 \end{align*}
	then it holds that
	\begin{equation}
	\label{eq:concent 2 proc}
		P\left(\left| {\mathcal W}_{2,\|\cdot\|_\infty}(L^n(\Y), \cL(Y)) - E[{\mathcal W}_{2,\|\cdot\|_\infty}(L^n(\Y), \cL(Y))] \right| \ge\varepsilon \right)\le 2e^{-C\varepsilon^2n}
	\end{equation}
	for a constant $C$ depending only on $L_F, L_G$ and $T$.

	If in addition $F$ does not depend on $z$, then there is $n_0\in \mathbb{N}$ such that for all $n\ge n_0$ we have
	\begin{equation}
	\label{eq:concen path}
		P\left({\mathcal W}_{2,||\cdot||_\infty}(L^n(\Y), \cL(Y) )\ge \varepsilon \right) \le e^{-C \varepsilon^2n}
	\end{equation}
	for some constant $C$ depending on $T, L_F, L_G, m$ and $k$.
\end{theorem}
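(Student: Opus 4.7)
All three bounds rely on coupling $\Y$ to an i.i.d. copy of the McKean--Vlasov solution: fix $(Y^i,Z^i)_{i=1}^n$ independent copies of \eqref{eq:MkV BSDE} driven by $W^i$ with terminal value $G^i$, and write $\bar L^n_t:=\tfrac1n\sum_i\delta_{Y^i_t}$ and $\bar Q^n:=\tfrac1n\sum_i\delta_{Y^i}$ for the corresponding empirical measures on $\RR^m$ and on $\Omega$. Each of the three estimates is obtained by transferring the concentration statement from the interacting system to $\bar L^n$ (resp.\ $\bar Q^n$) via BSDE stability, and then invoking a concentration tool tailored to the situation.

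\textbf{Bound \eqref{eq:conent bound}.} Writing the BSDE satisfied by $Y^{i,n}-Y^i$, applying It\^o's formula to $|Y^{i,n}-Y^i|^p$ and using (Lip$_p$), a symmetrization over $i$ together with ${\mathcal W}_p^p(L^n(\Y_t),\bar L^n_t)\le\tfrac1n\sum_i|Y^{i,n}_t-Y^i_t|^p$ yields
\[
{\mathcal W}_p^p(L^n(\Y_t),\bar L^n_t)\le C\int_t^T{\mathcal W}_p^p(L^n(\Y_u),\bar L^n_u)\,du+C\int_t^T{\mathcal W}_p^p(\bar L^n_u,\cL(Y_u))\,du.
\]
Gr\"onwall absorbs the first term into the constant $\exp(Te^{L_FT})$, and the triangle inequality then shows that $\{{\mathcal W}_p(L^n(\Y_t),\cL(Y_t))\ge\varepsilon\}\subset\{\sup_u{\mathcal W}_p(\bar L^n_u,\cL(Y_u))\ge\varepsilon_{F,T}\}$. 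The classical Fournier--Guillin concentration inequality applied to the i.i.d. family $(Y^i_u)$ supplies exactly the two regimes $a_{n,\varepsilon_{F,T}}$ and $b_{n,k,\varepsilon_{F,T}}$.

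\textbf{Bound \eqref{eq:concent 2 proc}.} With the added $(\Omega,\|\cdot\|_\infty)$-Lipschitz hypothesis on $F$ and $G$, I would view $\Phi(W^1,\dots,W^n):={\mathcal W}_{2,\|\cdot\|_\infty}(L^n(\Y),\cL(Y))$ as a functional on the Gaussian space $(\Omega^n,P^{\otimes n})$ and apply Gaussian concentration in the transportation-inequality form of \cite{T2bsde}. The heart of the step is a Cameron--Martin Lipschitz estimate: perturbing $W^i\mapsto W^i+h_i$ with $h_i\in H^1$ displaces $Y^{i,n}$ by at most $C\|h_i\|_{H^1}$ in supremum norm (Girsanov combined with pathwise BSDE stability under the new $\omega$-Lipschitz assumption). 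Coupling the perturbed and unperturbed empirical measures component-wise gives
\[
\bigl|\Phi(W+h)-\Phi(W)\bigr|^2\le \tfrac{C}{n}\sum_{i=1}^n\|h_i\|_{H^1}^2,
\]
i.e.\ $\Phi$ is $(C/\sqrt n)$-Lipschitz, and Gaussian concentration of Lipschitz functionals completes the proof.

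\textbf{Bound \eqref{eq:concen path}, the main obstacle.} To upgrade ``concentration around the mean'' to absolute concentration I must show that the mean itself vanishes with $n$. The It\^o estimate of the first step cannot in general be run inside $\sup_t$ because the $Z$-term produces a stochastic integral that forces the supremum outside the expectation. When $F$ does not depend on $z$, however, one has the explicit representation $Y^{i,n}_t=E\bigl[G^i+\int_t^TF_u(Y^{i,n}_u,L^n(\Y_u))\,du\bigm|\cF_t^n\bigr]$, so Doob's maximal inequality controls $\sup_{s\in[t,T]}|Y^{i,n}_s-Y^i_s|^2$ directly in terms of $\int_0^T{\mathcal W}_p^2(\bar L^n_u,\cL(Y_u))\,du$. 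Combining this with Proposition~\ref{prop:mom bound sup} applied to the i.i.d. sample (whose $Z$-integrability hypothesis follows from the $\omega$-Lipschitz assumptions via the Malliavin-calculus discussion after that proposition) yields $E[{\mathcal W}_{2,\|\cdot\|_\infty}(L^n(\Y),\cL(Y))]\le Cn^{-\beta}$ for some $\beta>0$. Choose $n_0$ so that this mean is below $\varepsilon/2$ for $n\ge n_0$; applying \eqref{eq:concent 2 proc} with $\varepsilon/2$ in place of $\varepsilon$ then delivers \eqref{eq:concen path}.
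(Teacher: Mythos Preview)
Your overall architecture matches the paper's, but the first step has a genuine gap, and the second takes a different (less direct) route.

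\textbf{Bound~\eqref{eq:conent bound}.} The displayed inequality you write cannot be obtained from It\^o's formula as an almost-sure statement: It\^o produces a stochastic integral term that vanishes only after taking (conditional) expectation, not pathwise. But you need an a.s.\ inequality to justify the event inclusion $\{\cW_p(L^n(\Y_t),\cL(Y_t))\ge\varepsilon\}\subset\{\sup_u\cW_p(\bar L^n_u,\cL(Y_u))\ge\varepsilon_{F,T}\}$. Even after passing to conditional expectations and running Gr\"onwall, the right-hand side carries a time integral (or a $\sup_u$), so you would have to control the i.i.d.\ empirical measure \emph{uniformly in time}, whereas Fournier--Guillin's concentration estimate is a \emph{fixed-time} result. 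The paper bypasses both obstacles via a Girsanov linearization (Lemma~\ref{lem: fundamental lemma}): writing $F_u(\cdot,\tilde Z^i_u,\cdot)-F_u(\cdot,Z^{i,n}_u,\cdot)=\gamma_u(\tilde Z^i_u-Z^{i,n}_u)$ and absorbing this into the Brownian drift removes the $Z$-difference entirely, yielding
\[
\cW_p\bigl(L^n(\Y_t),\cL(Y_t)\bigr)\le \exp\!\bigl(Te^{L_FT}\bigr)\,\cW_p\bigl(L^n(\tilde\Y_t),\cL(Y_t)\bigr)\qquad P\text{-a.s., with the \emph{same} }t\text{ on both sides.}
\]
Fournier--Guillin at the fixed time $t$ then applies directly.

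\textbf{Bound~\eqref{eq:concent 2 proc}.} Your Cameron--Martin route could be made to work, but the paper's argument is shorter and avoids pathwise stability under $H^1$-perturbations. One rewrites the system as a single $(\RR^m)^n$-valued BSDE for $\bY$ on $\Omega^n$ with generator $\bF$ and terminal condition $\bG$, checks that their Lipschitz constants are independent of $n$, and then \cite[Theorem~1.2]{T2bsde} gives a $T_2$ inequality for the law of $\bY$ on $(\Omega^n,\|\cdot\|_\infty)$ with a dimension-free constant. Gozlan's characterization of $T_2$ converts this into sub-Gaussian concentration for every $1$-Lipschitz functional, and $\boldsymbol\omega\mapsto\sqrt n\,\cW_{2,\|\cdot\|_\infty}(L^n(\boldsymbol\omega),\cL(Y))$ is $1$-Lipschitz.

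\textbf{Bound~\eqref{eq:concen path}.} Your strategy (show the mean vanishes, then halve $\varepsilon$ and apply~\eqref{eq:concent 2 proc}) is exactly the paper's. However, you invoke Proposition~\ref{prop:mom bound sup} for the mean bound, which requires the extra hypothesis $\sup_t E[|Z_t|^{2k}]<\infty$ that is not assumed in the theorem. The paper avoids this: it uses Theorem~\ref{thm:mom bound} (no $Z$-moment assumption) to bound $\int_0^T E[\cW_2^2(L^n(\Y_u),\cL(Y_u))]\,du$, and for the i.i.d.\ piece only the qualitative fact $E[\cW^2_{2,\|\cdot\|_\infty}(L^n(\tilde\Y),\cL(Y))]\to 0$, which needs no rate.
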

The proof of Theorem \ref{thm:concentration} relies on quadratic transportation inequalities for BSDEs investigated in \cite{T2bsde} and on standard results from the theory of concentration of measure, see Section \ref{sec:proof conc}.
\subsection{Interacting particles approximation of McKean-Vlasov BSDE}
\label{sec:inter-bsde}
This section is concerned with convergence of the sequence of  stochastic processes $(Y^{i,n}, Z^{i,n})$ to the solutions of the McKean-Vlasov equation.
These results will easily yield quantitative propagation of chaos results and have interesting applications in terms of PDEs.

\begin{theorem}
\label{thm:process conv}
	Assume that $E[|G^1|^k]<\infty$ for some $k>2$, and that $F$ satisfies (Lip$_p$).
	Then it holds that
	\begin{equation}
	\label{eq:process conv}
		E\left[\sup_{t\in [0,T]}|Y^{1,n}_t - Y^1_t |^2 \right]+ E\left[ \int_0^T|Z^{1,1,n}_t - Z_t^1|^2\,dt \right] \le Cr_{n,m,q,2} \quad \text{for all } (t,n)\in[0,T]\times\mathbb{N}
	\end{equation}
	for all $q \in (2,k)$ and for some constant $C$ depending on $T, m, L_F, L_G$ and $E[|Y_t^1|^{k}]$ and $r_{n,m,q,2}$ is defined by~\eqref{eq:def_r-nmqp}. 
\end{theorem}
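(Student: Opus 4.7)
The plan is to run a synchronous coupling between the interacting system $\Y$ and $n$ independent copies $(Y^1,\dots,Y^n)$ of the McKean--Vlasov solution, where each $Y^i$ solves \eqref{eq:MkV BSDE} driven by the \emph{same} Brownian motion $W^i$ that drives $Y^{i,n}$. By exchangeability of the system, $E[|Y^{i,n}_t-Y^i_t|^2]$ is independent of $i$, so it suffices to control $\delta Y := Y^{1,n}-Y^1$ and the associated $Z$-differences.

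First I would apply It\^o's formula to $|Y^{i,n}_t-Y^i_t|^2$ and take expectations; the Brownian stochastic integrals are true martingales and vanish, so I obtain
\[
E[|\delta Y^i_t|^2] + E\Big[\int_t^T |Z^{i,i,n}_u - Z^i_u|^2\,du\Big] + \sum_{k\neq i}E\Big[\int_t^T |Z^{i,k,n}_u|^2\,du\Big] = 2E\Big[\int_t^T \delta Y^i_u \cdot \Delta F_u \,du\Big],
\]
with $\Delta F_u := F_u(Y^{i,n}_u, Z^{i,i,n}_u, L^n(\Y_u)) - F_u(Y^i_u, Z^i_u, \cL(Y_u))$. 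The Lipschitz assumption (Lip$_p$) and Young's inequality bound the right-hand side by the sum of $C\int_t^T E[|\delta Y^i_u|^2]\,du$, a piece $\tfrac12 E[\int |Z^{i,i,n}_u-Z^i_u|^2\,du]$ that can be absorbed on the left, and $C\int_t^T E[\cW_p(L^n(\Y_u),\cL(Y_u))^2]\,du$.

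To close the argument I split $\cW_p(L^n(\Y_u), \cL(Y_u)) \le \cW_p(L^n(\Y_u), L^n(\vec Y_u)) + \cW_p(L^n(\vec Y_u), \cL(Y_u))$, where $\vec Y_u = (Y^1_u,\dots,Y^n_u)$. The first piece is dominated by $\bigl(\tfrac1n\sum_k|\delta Y^k_u|^p\bigr)^{1/p} \le \bigl(\tfrac1n\sum_k|\delta Y^k_u|^2\bigr)^{1/2}$ since $p\le 2$, and its square in expectation equals $E[|\delta Y^1_u|^2]$ by exchangeability. For the second piece the $Y^i$'s are genuinely i.i.d.\ with law $\cL(Y_u)$, so $\cW_p\le \cW_2$ together with the classical empirical-Wasserstein estimate for i.i.d.\ samples (the very ingredient underlying Theorem~\ref{thm:mom bound} applied with $p=2$) yields $E[\cW_p(L^n(\vec Y_u), \cL(Y_u))^2] \le Cr_{n,m,q,2}$ for any $q\in(2,k)$, using $\sup_u E[|Y_u|^k]<\infty$ which follows from standard moment estimates on \eqref{eq:MkV BSDE}. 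Gronwall then gives $\sup_{t\le T} E[|\delta Y^1_t|^2] \le Cr_{n,m,q,2}$, after which the absorbed term produces $E[\int_0^T|Z^{1,1,n}_u-Z^1_u|^2\,du]\le Cr_{n,m,q,2}$.

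To upgrade the $Y$-bound to $E[\sup_t|\delta Y^1_t|^2]$ I keep the Brownian integrals in the It\^o expansion, apply Burkholder--Davis--Gundy to the martingale $\sum_k\int (\delta Y^1)\cdot (\,\cdot\,)\,dW^k$, and use Cauchy--Schwarz and Young to reabsorb $\tfrac12 E[\sup_t|\delta Y^1_t|^2]$. The main subtlety is the presence of the cross-$Z$ terms $Z^{i,k,n}$ with $k\ne i$, which have no McKean--Vlasov counterpart: It\^o places them on the left of the energy identity with a favourable sign, so they cost nothing in the estimate and, as a by-product, one even obtains $\sum_{k\ne i} E[\int_0^T|Z^{i,k,n}_u|^2\,du] = O(r_{n,m,q,2})$.
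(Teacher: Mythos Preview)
Your argument is correct and is in fact a slightly more self-contained variant of the paper's proof. The paper does not split $\cW_p(L^n(\Y_u),\cL(Y_u))$ by the triangle inequality inside the It\^o estimate; instead it invokes Theorem~\ref{thm:mom bound} directly, so that $E[\cW_2^2(L^n(\Y_u),\cL(Y_u))]\le Cr_{n,m,q,2}$ enters as a black box (that theorem having been proved earlier via the coupling Lemma~\ref{lem: fundamental lemma}, which uses a Girsanov change of measure to handle the $z$-dependence of $F$). Your route replaces the measure change by absorbing the $Z$-difference with Young's inequality and then feeds the empirical--i.i.d.\ triangle inequality and exchangeability straight into the Gronwall loop; this is the same circle of ideas, just inlined. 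For the uniform-in-time estimate the paper represents $\delta Y_t$ as a conditional expectation and applies Doob's maximal inequality, whereas you use BDG on the martingale part of the It\^o expansion; both are standard and interchangeable here. Your observation that the cross terms $\sum_{k\neq i}E\!\int_0^T|Z^{i,k,n}_u|^2\,du$ sit on the good side of the energy identity and are therefore $O(r_{n,m,q,2})$ is correct and is implicit in the paper's It\^o computation as well.
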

\begin{remark}
\label{rem:special-case-linear}
	The above result shows that in general, the sequences $(Y^{i,n})$ and $(Z^{i,i,n})$ converge at the same rate as $L^n(\Y_t)$.
	In the special case of particles in ``linear'' interaction, such a convergence result has been analyzed in~\cite{Buck-Dje-Li-Peng09}.
	More precisely, \cite{Buck-Dje-Li-Peng09} considers the case when $\Y=(Y^{1,n},\dots, Y^{n,n})$ solves the system
	\begin{equation}
	\label{eq:lin inter}
		Y^{i,n}_t = G^i + \int_t^T\frac1n \sum_{j=1}^n f_u(Y^{i,n}_u,  Y^{j,n}_u,Z^{i,i,n}_u)\,du - \int_t^TZ^{i,n}_u\,dW_u
	\end{equation}
	where $W$ is a given Brownian motion, and $(G^1,\dots,G^n)$ are functions of the terminal values of a system of interacting (forward) particles.
	In this case, the rate of convergence of the $n$-particle system to the McKean-Vlasov equation can be improved and does not depend on the dimension.
	Interestingly, we can slightly generalize the result of \cite{Buck-Dje-Li-Peng09} using different arguments.
	We consider the system 
	\begin{equation}
	\label{eq:gen lin inter}
		Y^{i,n}_t = G^i + \int_t^T F_u\Big(Y_u^{i,n}, Z^{i,i,n}_u,\frac1n \sum_{j=1}^nf_u(Y^{i,n}_u,  Y^{j,n}_u,Z^{i,n}_u) \Big)\,du - \int_t^T\sum_{j=1}^nZ^{i,j,n}_u\,dW^j_u
	\end{equation}
	that often appears in applications, see e.g.~\cite{MR2352434,MR3489817,MR3325083} for linear-quadratic mean-field models and \cite{Hu-Ren-Yang} for a contract theory problem.
	We obtain the usually optimal rate $1/\sqrt{n}$ for this more general system.
\end{remark}

In fact, consider the McKean-Vlasov equation
\begin{equation}
\label{eq:lin inter McKVl}
	Y^i_t = G^i + \int_t^T F_u\Big(Y^i_u, Z^i_u, \int_{\mathbb{R}^m}f_u(Y^i_u, y, Z^{i}_u)\, d\cL(Y^i_u)(y)\Big)\,du - \int_t^TZ_u^i\,dW^i_u
\end{equation}
and the following Lipschitz continuity and linear growth conditions
\begin{itemize}
	\item[(Lip)] The functions $F:[0,T]\times \Omega\times\mathbb{R}^m\times \mathbb{R}^{m\times d}\times \mathbb{R}^m\to \mathbb{R}^m$ and $f:[0,T]\times\mathbb{R}^m\times\mathbb{R}^m\times \mathbb{R}^{m\times d}\to \mathbb{R}^m$ are respectively $L_F$-Lipschitz and $L_f$-Lipschitz continuous 
	and of linear growth in $(y,z,a)$ and $(y_1,y_2,z)$ uniformly with respect to $(t,\omega)$ and $t$ respectively.
	That is, there are constants $L_F,L_f\ge0$ such that
	\begin{equation*}
		|F_t(\omega,y,z,a)-F_t(\omega',y',z',a')|\le L_F\left(\|\omega - \omega'\|_\infty+|y-y'| + |z-z'| + |a-a'| \right)
	\end{equation*}
	\begin{equation*}
		\qquad|F_t(y,z,a)| \le L_F\left(1 + |y| + |z| +|a| \right)
	\end{equation*}
	and
	\begin{equation*}
		|f_t(y_1,y_2,z)-f_t(y'_1,y_2',z')| \le L_f\left(|y_1-y_1'| +|y_2-y_2'|+ |z-z'| \right)
	\end{equation*}
	\begin{equation*}
		\,\qquad|f_t(y,y_1,z)| \le L_f\left(1 + |y| + |y_1| +|z| \right)
	\end{equation*}
	for all $t \in [0,T]$,  $a,a',y,y',y_1,y_2, y_1', y_2' \in \mathbb{R}^m$, $z,z'\in \mathbb{R}^{m \times d}$.
\end{itemize}
\begin{remark}
\label{rem:lin syst exists}
	Note that under (Lip), if $G^i$ has a second moment, then both equations \eqref{eq:gen lin inter} and \eqref{eq:lin inter McKVl} admits unique, square integrable solutions.
\end{remark}

\begin{proposition}
\label{prop:lin inter}
	Assume that $E[|G^i|^2]<\infty$ and is Lipschitz continuous with respect to the uniform norm on $\Omega$, and that the functions $F$ and $f$ satisfy (Lip).
	The respective solutions $ (Y^{i,n},Z^{i,j,n})$ and $(Y^i,Z^i)$ of the equations \eqref{eq:gen lin inter} and \eqref{eq:lin inter McKVl} satisfy
	\begin{equation}
	\label{eq:mom bound_2}
			E\left[|Y^{1,n}_t - Y_t^1|^2 \right] + E\left[\int_0^T|Z^{1,n}_t- Z_t^1|^2\,dt \right]\le C{n}^{-1} \quad \text{for all } (t,n)\in[0,T]\times\mathbb{N}
	\end{equation}
	for some constant $C$ depending only on $T,L_F,L_f$ and $L_G$.
\end{proposition}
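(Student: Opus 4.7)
The plan is to construct a coupling between the interacting system \eqref{eq:gen lin inter} and an iid family of McKean--Vlasov solutions. For $i=1,\dots,n$, let $(Y^i,Z^i)$ denote the unique square integrable solution of \eqref{eq:lin inter McKVl} driven by $W^i$ with terminal condition $G^i$; under (Lip) and the second moment assumption this solution exists (Remark~\ref{rem:lin syst exists}), and since $(G^i,W^i)_{i=1}^n$ are iid, the family $(Y^i,Z^i)_{i=1}^n$ consists of iid copies of $(Y,Z)$. Exchangeability of the data passes to the interacting system by uniqueness, so that $E[|Y^{i,n}_t - Y^i_t|^2]$ and $E[\int_0^T|Z^{i,i,n}_s - Z^i_s|^2 ds]$ are independent of $i$. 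Applying It\^o's formula to $|Y^{i,n}_t - Y^i_t|^2$ on $[t,T]$ and noting that $W^i$ and $W^j$ are independent for $j\ne i$, so the bracket produces $|Z^{i,i,n}_s - Z^i_s|^2 + \sum_{j\ne i}|Z^{i,j,n}_s|^2$, taking expectations yields
\begin{equation*}
  E[|Y^{i,n}_t - Y^i_t|^2] + E\Bigl[\int_t^T |Z^{i,i,n}_s - Z^i_s|^2 ds\Bigr] \le 2 E\Bigl[\int_t^T |Y^{i,n}_s - Y^i_s|\, |F^{i,n}_s - F^i_s| ds\Bigr],
\end{equation*}
where $F^{i,n}_s - F^i_s$ denotes the difference of the generators evaluated at $(Y^{i,n}_s, Z^{i,i,n}_s, A^{i,n}_s)$ and $(Y^i_s, Z^i_s, A^i_s)$, with $A^{i,n}_s := \tfrac{1}{n}\sum_{j} f_s(Y^{i,n}_s, Y^{j,n}_s, Z^{i,i,n}_s)$ and $A^i_s := \int f_s(Y^i_s, y, Z^i_s)\, d\cL(Y^i_s)(y)$.

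The key step is estimating $|A^{i,n}_s - A^i_s|$. Split $A^{i,n}_s - A^i_s = \Delta^{i,n}_s + R^{i,n}_s$ with
\begin{equation*}
  \Delta^{i,n}_s := \frac{1}{n}\sum_{j} \bigl[f_s(Y^{i,n}_s, Y^{j,n}_s, Z^{i,i,n}_s) - f_s(Y^i_s, Y^j_s, Z^i_s)\bigr], \quad R^{i,n}_s := \frac{1}{n}\sum_{j} f_s(Y^i_s, Y^j_s, Z^i_s) - A^i_s.
\end{equation*}
Lipschitz continuity of $f$ gives $|\Delta^{i,n}_s| \le L_f\bigl(|Y^{i,n}_s - Y^i_s| + \tfrac{1}{n}\sum_j |Y^{j,n}_s - Y^j_s| + |Z^{i,i,n}_s - Z^i_s|\bigr)$. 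For $R^{i,n}_s$, the crucial observation is that for $j \ne i$ the variable $Y^j_s$ is independent of $(Y^i_s, Z^i_s)$ and has law $\cL(Y^i_s)$, so $E[f_s(Y^i_s, Y^j_s, Z^i_s) \mid Y^i_s, Z^i_s] = A^i_s$; the $n-1$ summands for $j\ne i$ are thus conditionally iid and centered. A standard variance computation combined with the linear growth of $f$ and the a priori $L^2$ estimates on $(Y^i,Z^i)$ then yields $\int_0^T E[|R^{i,n}_s|^2] ds \le C/n$, the diagonal term $j=i$ contributing only $O(n^{-2})$.

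Inserting these bounds and using Young's inequality $2ab \le \varepsilon a^2 + \varepsilon^{-1}b^2$ to absorb a small multiple of $|Z^{i,i,n}_s - Z^i_s|^2$ into the left hand side, together with the exchangeability identity $\tfrac{1}{n}\sum_j E[|Y^{j,n}_s - Y^j_s|^2] = E[|Y^{1,n}_s - Y^1_s|^2]$, produces the closed inequality
\begin{equation*}
  E[|Y^{1,n}_t - Y^1_t|^2] + c\, E\Bigl[\int_t^T |Z^{1,1,n}_s - Z^1_s|^2 ds\Bigr] \le C \int_t^T E[|Y^{1,n}_s - Y^1_s|^2] ds + \frac{C}{n}.
\end{equation*}
Gronwall's lemma, applied backward in time to the $Y$ part alone and then plugged back to recover the $Z$ part, delivers both estimates in~\eqref{eq:mom bound_2}. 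The main subtlety is the decomposition of $A^{i,n}_s - A^i_s$: one must simultaneously control a Lipschitz part that reinjects $|Z^{i,i,n}_s - Z^i_s|$ and is absorbed only for a careful choice of $\varepsilon$, and the conditionally iid averaging error $R^{i,n}_s$, whose $n^{-1/2}$ scale in $L^2$ is precisely what produces the dimension-free rate $1/n$.
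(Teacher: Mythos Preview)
Your argument is correct and structurally matches the paper's: It\^o's formula on $|Y^{i,n}_t - Y^i_t|^2$, the same decomposition of $A^{i,n}_s - A^i_s$ into a Lipschitz piece and the fluctuation $R^{i,n}_s$, Young's inequality to absorb the $Z$-difference, exchangeability to close, and Gronwall. The only substantive difference is in how $E[|R^{i,n}_s|^2]$ is bounded. You control the conditional variance directly by the second moment, using only the linear growth of $f$ and the a priori $L^2$ bounds on $(Y,Z)$; this is elementary and never uses the Lipschitz hypothesis on $G$, but the resulting constant depends on $E[|G|^2]$ (through $\sup_t E[|Y_t|^2]$ and $E\int_0^T |Z_t|^2\,dt$) rather than on $L_G$ alone. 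The paper instead invokes that $\cL(Y^1_u)$ satisfies Talagrand's $T_2$ inequality (from \cite{T2bsde}, and this is exactly where the Lipschitz condition on $G$ enters), hence Poincar\'e's inequality, giving the uniform bound $\mathrm{Var}\bigl(f_u(x, Y^j_u, z)\bigr) \le C L_f^2$ independent of $(x,z)$ and of any moment of $Y$. Your route is simpler and works under a weaker assumption on $G$; the paper's route delivers the precise dependence of $C$ on $(T, L_F, L_f, L_G)$ stated in the proposition and showcases the functional-inequality machinery.
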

Direct consequences of Theorem \ref{thm:process conv} and Proposition \ref{prop:lin inter} are the following quantitative propagations of chaos. 
\begin{corollary}
\label{cor:prob chaos}
	Put $\theta^{k,n}:=\text{Law}(Y^{1,n}, \dots, Y^{k,n})$	and let $\cL(Y)^{\otimes k}$ be the $k$-fold product of the law $\cL(Y^1)$ of $Y^1$, solution of the McKean-Vlasov BSDE \eqref{eq:MkV BSDE}.
	If $ (Y^{i,n},Z^{i,j,n})$ and $(Y^i,Z^i)$ solve \eqref{eq:bsde system} and \eqref{eq:MkV BSDE} respectively, then under the conditions of Theorem \ref{thm:mom bound}, we have, for all $n \in \mathbb{N}$ and all $k \le n$, that
	\begin{equation}
		\label{eq:estim product}
			\begin{cases}
			{\mathcal W}_{2,||\cdot||_\infty}^2(\cL(Y^{1,n}), \cL(Y^1)) \le Cr_{n,m,q,p}\\ 
			{\mathcal W}_{2,||\cdot||_\infty}^2(\theta^{k,n}, \cL(Y)^{\otimes k}) \le k Cr_{n,m,q,p} 
			\end{cases}
	\end{equation}
	for some constant $C$ depending on $T, L_F, L_G$ and $m$.
	
	If  $ (Y^{i,n},Z^{i,j,n})$ and $(Y^i,Z^i)$ solve \eqref{eq:gen lin inter} and \eqref{eq:lin inter McKVl} respectively, then under the conditions of Proposition \ref{prop:lin inter} we have, for all $(t,n)\in[0,T]\times\mathbb{N}$ and all $k \le n$,  that
		\begin{equation}
		\label{eq:estim product line int}
			\begin{cases}
				{\mathcal W}_2^2(\cL(Y^{1,n}_t), \cL(Y^1_t)) \le Cn^{-1}\\
				{\mathcal W}_2^2(\theta^{k,n}_t, \cL(Y_t)^{\otimes k}) \le kCn^{-1} 
			\end{cases}
	\end{equation}
	for some constant $C$ depending on $T, L_F, L_G, L_f$ and $m$.
\end{corollary}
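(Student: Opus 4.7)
The plan rests on a synchronous coupling. I would build the McKean--Vlasov solutions $(Y^i)_{i=1,\dots,n}$ of \eqref{eq:MkV BSDE} on the same probability space as the interacting particles, driving each $Y^i$ with the Brownian motion $W^i$ and the terminal condition $G^i$ already appearing in \eqref{eq:bsde system}. Under the standing assumptions the $Y^i$ are i.i.d. with common law $\cL(Y)$, and each pair $(Y^{i,n},Y^i)$ is then a coupling of $\cL(Y^{i,n})$ with $\cL(Y)$ on the path space $(\Omega,\|\cdot\|_\infty)$. From the very definition of the Wasserstein distance,
\begin{equation*}
	{\mathcal W}_{2,\|\cdot\|_\infty}^2(\cL(Y^{1,n}), \cL(Y^1)) \le E\Big[\sup_{t\in[0,T]}|Y^{1,n}_t - Y^1_t|^2 \Big],
\end{equation*}
and the right-hand side is precisely what Theorem~\ref{thm:process conv} bounds, which yields the first estimate in \eqref{eq:estim product}.

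For the joint law of the first $k$ particles I would equip $\Omega^k$ with the product norm $\|(\omega_1,\dots,\omega_k)\|^2:=\sum_{i=1}^k\|\omega_i\|_\infty^2$. The independence of $(Y^i)_{i=1}^k$ ensures that their joint law equals $\cL(Y)^{\otimes k}$, while $(Y^{1,n},\dots,Y^{k,n})$ has law $\theta^{k,n}$ by definition, so $\bigl((Y^{i,n},Y^i)\bigr)_{i=1}^k$ is a coupling of $\theta^{k,n}$ and $\cL(Y)^{\otimes k}$. The definition of ${\mathcal W}_{2,\|\cdot\|_\infty}$ on this product space then gives
\begin{equation*}
	{\mathcal W}_{2,\|\cdot\|_\infty}^2(\theta^{k,n},\cL(Y)^{\otimes k}) \le \sum_{i=1}^k E\Big[\sup_{t\in[0,T]}|Y^{i,n}_t-Y^i_t|^2 \Big] = k\, E\Big[\sup_{t\in[0,T]} |Y^{1,n}_t - Y^1_t|^2\Big],
\end{equation*}
the last equality being a consequence of the exchangeability of the system, which is symmetric in the i.i.d. data $(W^i,G^i)$. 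A second application of Theorem~\ref{thm:process conv} delivers the second half of \eqref{eq:estim product}.

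The bounds \eqref{eq:estim product line int} for the linear-interaction system \eqref{eq:gen lin inter} follow by the identical coupling-plus-exchangeability recipe, with the pointwise distance on $\RR^m$ replacing the supremum norm on $\Omega$ and Proposition~\ref{prop:lin inter} replacing Theorem~\ref{thm:process conv} to supply the $Cn^{-1}$ bound on $E[|Y^{1,n}_t-Y^1_t|^2]$. I do not anticipate any genuine obstacle; the only point that deserves explicit attention is pinning down the product metric on $\Omega^k$ (resp. $(\RR^m)^k$) so that the coupling inequality produces precisely the factor $k$ appearing in the two statements, after which everything reduces to the two a priori process-level estimates already established.
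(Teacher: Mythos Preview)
Your proposal is correct and matches the paper's own proof essentially line for line: both bound the Wasserstein distance by the expected squared path (resp.\ pointwise) distance via the synchronous coupling $(Y^{i,n},Y^i)$, then invoke Theorem~\ref{thm:process conv} for \eqref{eq:estim product} and Proposition~\ref{prop:lin inter} for \eqref{eq:estim product line int}. Your explicit appeal to exchangeability to extract the factor $k$ is a welcome elaboration of what the paper leaves implicit.
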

\begin{proof}
	Since $\cL(Y^1_t) = \cL(Y^i_t)$, it follows by definition that
	\begin{equation*}
		{\mathcal W}^2_{2,||\cdot||_\infty}(\cL(Y^{1,n}), \cL(Y^{1})) \le E\left[\sup_{t \in [0,T]}|Y^{1,n}_t - Y^1_t|^2 \right]
	\end{equation*}
	and
	\begin{equation*}
	 {\mathcal W}_{2,||\cdot||_\infty}^2(\theta^{k,n}, \cL(Y)^{\otimes k}) \le E\left[\sum_{i=1}^k\sup_{t \in [0,T]}|Y^{i,n}_t - Y^i_t|^2 \right].
	\end{equation*}
	Thus, \eqref{eq:estim product} follows by \eqref{eq:process conv}. Similarly \eqref{eq:estim product line int} follows by \eqref{eq:mom bound_2}. 
\end{proof}
\begin{example}[Convolution interaction]
\label{ex:convolution-interaction}
	In relation to a principal-agent problem of mean-field type, \citet{Hu-Ren-Yang} investigated the case of the generator $F_t(y,z,\mu):=  \varphi\ast\mu(y)$ for some function $ \varphi:\mathbb{R}^m\to\mathbb{R}^m$, where the convolution $ \varphi\ast\mu$ is defined as $ \varphi\ast \mu(x):= \int_{\mathbb{R}^m} \varphi(x-y)\,d\mu(y)$.
	This case falls within the scope of Proposition \ref{prop:lin inter} (with $F(t, y, z, a) = a$ and $f(y, y', z) = \varphi(y - y')$), and Corollary \ref{cor:prob chaos} additionally gives a sharp convergence rate. 
\end{example}

\subsection{Finite dimensional approximation of parabolic PDEs on the Wasserstein space}

In this subsection, we assume that $F$ does not depend on $(t,\omega)$.
Given four functions $B :\RR^d\times \cP_2(\RR^d)\to \RR^d$, $\sigma:\RR^d\times \cP_2(\RR^d)\to \RR^{d\times d}$, $G:\RR^d\times \cP_2(\RR^d)\to \RR^m$ and $F:\RR^d\times \RR^m\times\RR^{m\times d}\times \cP_2(\RR^d)\times \cP_2(\RR^m)\to \RR^m$,
we consider the PDE 
\begin{align}
\label{eq:master pde}
\begin{cases}
	\partial_tV(t,x,\mu) + B(x,\nu_1)\partial_xV(t,x,\mu) + \frac12\text{tr}(\partial_{xx}V(t,x,\mu)a(x,\nu_1))\\
	\quad + F(x,V(t,x,\mu), \sigma'(x,\nu_1)\partial_xV(t,x,\mu), \nu_1, \nu_2)
	\\
	 \quad + \int_{\RR^d}\partial_\mu V(t,x,\mu)(y)\cdot B(y,\nu_1)d\mu(y) + \int_{\RR^d}\frac12\text{tr}\left(\partial_y\partial_{\mu}V(t,x,\mu)(y) a(y,\nu_1)\right)\,d\mu(y) = 0 
	 \\
	 V(T, x, \mu) = G(x,\mu)
	\end{cases}
\end{align}
with $ (t,x,\mu) \in [0,T)\times \RR^d \times {\mathcal{P}}_2(\RR^d)$, $a := \sigma\sigma'$, $\nu_1$ and $\nu_2$ are the first and second marginals of the probability measure $\nu$, which itself is the law of $(\xi, V(t, \xi, \mu))$ when $\cL(\xi) = \mu$.
The derivative $$\partial_\mu V(t,x,\mu)(y)$$ denotes the so-called Wasserstein derivative  (also called L-derivative) of the function $V$ in the direction of the probability measure $\mu$, see e.g.~\cite{MR2401600,PLLcollege} or~\cite[Chapter 5]{MR3752669} for details.
The goal of this section is to show that the solution $V$ of the PDE \eqref{eq:master pde}, written on the infinite dimensional space $[0,T]\times \RR^d \times {\mathcal{P}}_2(\RR^d)$ can be approximated by a sequence of solutions of PDEs written on the finite dimensional space $[0,T]\times (\RR^d)^n$.
More precisely, we will be interested in the system of PDEs
\begin{equation}
\label{eq:PDE n system}
	\begin{cases}
		\partial_tv^{i,n}(t,\bx) + B(x_i, L^n(\x))\partial_{x_i}v^{i,n}(t,\x) + \frac12\text{tr}\left( \partial_{x_ix_i}v^{i,n}(t,\x)a(x_i,L^n(\x)) \right)
		\\
		\quad + F\left( x_i,v^{i,n}(t,\x), \partial_{x_i}v^{i,n}(t,\x), L^n(\x)\sigma(x_i,L^n(\x)), \frac1n\sum_{j=1}^n\delta_{v^{j,n}(t,\x)} \right) = 0\quad \text{with } (t,\x) \in [0,T]\times (\RR^d)^n
		\\
		v^{i,n}(T,\x) = G\left(x_i,L^n(\x)\right), \quad \x = (x_1, \dots, x_n)\in (\RR^d)^n
		\\
		i=1,\dots,n.
	\end{cases}
\end{equation}
The following condition is copied almost verbatim from \cite{ChassagneuxCrisanDelarue_Master}.
It guarantees the existence of a unique classical solution $V$ to \eqref{eq:master pde} at least for $T$ small enough. 
\begin{itemize}
	\item[(PDE)] The functions $\sigma, B, F$ and $G$ satisfy the following:
	\begin{itemize}
		\item[(PDE1)] 
			The function $\sigma$ is bounded, and the functions $B, \sigma, F$ and $G$ are three times continuously differentiable in $w = (x,y,z)$ and $\mu$, with bounded and Lipschitz-continuous first and second derivatives (with common bound and Lipschitz constant denoted $L_F$).
		\item[(PDE2)]
			There exist a constant $\alpha\ge0$ and a function $\Phi_\alpha: (L^2(\Omega,{\mathcal{F}}_T, P ;\mathbb{R}^{d+m}))^2 \ni(\chi, \chi') \mapsto \Phi_\alpha(\chi, \chi') \in \mathbb{R}_+$ continuous at any point $(\chi, \chi)$ of the diagonal and such that
	\begin{equation*}
		\Phi_\alpha(\chi,\chi') \le E\left[ \Big(1 + |\chi|^{2\alpha} + |\chi'|^{2\alpha} + ||\chi||_2^{2\alpha}\Big)|\chi - \chi'| \right]^{1/2}
	\end{equation*}
	for all $\chi, \chi \in L^2(\Omega,{\mathcal{F}}_T, P ;\mathbb{R}^{d+m})$ satisfying $\cL(\chi) = \cL(\chi')$.
	Moreover, letting $h= B, \sigma, F$ or $G$, it holds
	\begin{equation*}
		\begin{cases}
			|\partial_wh(w,\cL(\chi)) - \partial_wh(w', \cL(\chi'))  | \le L_F\Big( |w - w'| + \Phi_\alpha(\chi, \chi')\Big)\\
			|\partial_\mu h(w,\chi) - \partial_\mu h(w', \chi')  | \le L_F\Big( |w - w'| + \Phi_\alpha(\chi, \chi')\Big)
		\end{cases}
	\end{equation*}
	for all $w = (x,y,z), w' = (x',y', z')$ and $\chi,\chi' \in L^2(\Omega,{\mathcal{F}}_T, P ;\mathbb{R}^{d+m})$.
	Furthermore, for every $\chi \in L^2(\Omega,{\mathcal{F}}_T, P ;\mathbb{R}^{d+m})$, the family $(\partial_\mu h(w,\chi))_w$ is uniformly integrable.
	\item[(PDE3)]
		Letting $h= B, \sigma, F$ or $G$, the mapping $v\mapsto \partial_\mu h(w, \mu)(v)$ is $L_F$-Lipschitz continuous, the mapping  $(w,v)\mapsto \partial_\mu h(w,\mu)(v)$ is continuously differentiable at any point $(w,v)$ such that $v $ is in the support of $\mu$, $(w, v)\mapsto \partial_v[\partial_\mu h(w,\mu)](v)$ and $(w,v)\mapsto \partial_w[\partial_\mu h(w,\mu)](v)$ are continuous and it holds
	\begin{align*}
		&E\big[ | \partial_w[\partial_\mu h(w, \cL(\chi))](\chi) - \partial_w[\partial_\mu h(w', \cL(\chi'))](\chi') |^2 \big]^{1/2}\\
		&\qquad + E\big[ | \partial_v[\partial_\mu h(w,\cL(\chi))](\chi) - \partial_v[\partial_\mu h(w', \cL(\chi'))](\chi') |^2 \big]^{1/2} \le L_F\Big(|w - w'| + \Phi_\alpha(\chi, \chi')\Big).
	\end{align*}
	\end{itemize}
\end{itemize}
Under the condition (PDE), we then have the announced convergence of $v^{1,n}$ to $V$.
More precisely, we have:

\begin{theorem}
\label{thm:lim PDE}
	Assume that $F$ does not depend on $(t, \omega)$ and that the condition (PDE) is satisfied.
	There exists $c$ such that if $T \le c$, then the following holds true. 
	The sequence $(v^{1,n})_n$ converges to $V$ in the sense that for every i.i.d. sequence $(\xi_i)_{i\in \mathbb{N}}$ in $L^k(\Omega,{\mathcal{F}}_t, P; \RR^d)$ for some $k>4$  and every $\mu \in {\mathcal{P}}_2(\RR^d)$ with $\cL(\xi_1) = \mu$, it holds that
	\begin{equation}
	\label{eq:PDE linear inter}
		E\Big[\sup_{t \in [0,T]} | v^{1,n}(t,\xi_1, \dots,\xi_n) - V(t,\xi_1,\mu) |^2 \Big]\le C_{L_F, T,k} \, \varepsilon_n 
	\end{equation}
		where $\varepsilon_n$ is defined as
	\begin{equation}
	\label{eq:def-epsilon-CD}
		\varepsilon_n
		=
		\begin{cases}
			n^{-1/2}, & \hbox{ if } d < 4,\\
			n^{-1/2} \log(n), & \hbox{ if } d = 4,\\
			n^{-2/d}, & \hbox{ if } d > 4
		\end{cases}
	\end{equation}
	 and $C_{L_F, T,k}$ is a constant depending on $L_F,T$ and $E[|\xi_1|^k]$.
	
	Moreover, for every $n \in \NN$ and every $t\in [0,T]$ it holds that 
	\begin{equation}
	\label{eq:PDE linear inter empi}
		 E\Big[| v^{i,n}(t,\xi_1, \dots, \xi_n) - V(t, \xi_i, L^n(\boldsymbol\xi)) |^2 \Big] \le C_{L_F, T}(\varepsilon_n + r_{n,d,k,2})
	\end{equation}
	with $\boldsymbol \xi:= (\xi_1, \dots, \xi_n)$, where $C_{L_F, T}$ depends on the Lipschitz constant $L_F$ of $B,F$ and $G$ and on $T$, and $r_{n,d,k,2}$ is defined by~\eqref{eq:def_r-nmqp}.
\end{theorem}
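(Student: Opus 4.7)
The plan is to reduce the PDE convergence to the quantitative backward propagation of chaos already established in this paper, by exploiting the nonlinear Feynman--Kac representation developed in \cite{ChassagneuxCrisanDelarue_Master}. Under (PDE) and for $T$ small enough, the master equation \eqref{eq:master pde} admits a unique classical solution $V$ that is represented as $V(t,x,\mu)=\bar Y^{t,x,\mu}_t$, where $(\bar X,\bar Y,\bar Z)$ solves the decoupled McKean--Vlasov FBSDE with forward initial condition $x$ and reference measure $\mu$ (the law of an independent copy of the McKean--Vlasov forward process issued from a random variable of law $\mu$). Taking $x=\xi_1$ with $\cL(\xi_1)=\mu$, one then has $V(t,\xi_1,\mu)=Y^1_t$, the value at time $t$ of the McKean--Vlasov FBSDE started from $\xi_1$. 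A symmetric verification argument (applying It\^o's formula to $v^{i,n}$ composed with the $n$-particle forward process) identifies $v^{i,n}(t,\boldsymbol\xi)=Y^{i,n}_t$ where $(\X,\Y,\Z)$ solves the $n$-particle interacting FBSDE with $X^{i,n}_t=\xi_i$.

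Given these representations, the estimate \eqref{eq:PDE linear inter} is obtained by controlling the forward and backward components in turn. Classical McKean--Vlasov propagation of chaos combined with the Fournier--Guillin bound for the Wasserstein distance between the empirical measure of i.i.d.\ samples and their common law yields
$$
E\Big[\sup_{s\in[t,T]}|X^{1,n}_s-X^1_s|^2\Big]\le C\varepsilon_n.
$$
Injecting this into a standard BSDE stability estimate and invoking Theorem \ref{thm:process conv} for the backward part (the forward process playing the role of an already-controlled Lipschitz driver for $F$ and for the terminal condition $G$), one transfers the forward error to the backward side and obtains \eqref{eq:PDE linear inter}. For the second estimate \eqref{eq:PDE linear inter empi}, I decompose
$$
v^{i,n}(t,\boldsymbol\xi)-V(t,\xi_i,L^n(\boldsymbol\xi))=\bigl[v^{i,n}(t,\boldsymbol\xi)-V(t,\xi_i,\mu)\bigr]+\bigl[V(t,\xi_i,\mu)-V(t,\xi_i,L^n(\boldsymbol\xi))\bigr].
$$
The first bracket is bounded in $L^2$ by $\varepsilon_n$ thanks to \eqref{eq:PDE linear inter} applied to particle $i$ (legitimate by exchangeability). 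The second bracket is handled by the Lipschitz continuity of $V$ in the measure argument, which follows from the Wasserstein-gradient estimates on $V$ established in \cite{ChassagneuxCrisanDelarue_Master}, combined with the bound $E[\cW_2^2(L^n(\boldsymbol\xi),\mu)]\le Cr_{n,d,k,2}$, which is exactly the Fournier--Guillin rate in dimension $d$ for i.i.d.\ samples with a finite $k$-th moment.

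The main obstacle is the first step: rigorously obtaining the probabilistic representation and, crucially, securing enough regularity of $V$ in both the spatial and the measure variables to run the BSDE stability arguments and to compare $V$ at different measure arguments by Lipschitz continuity. This is precisely the role of the smallness condition $T\le c$, which guarantees classical solvability of \eqref{eq:master pde} and the required $C^{1,2,2}$-bounds via the results of \cite{ChassagneuxCrisanDelarue_Master}. Once this regularity is available, the remainder is a relatively routine combination of the Fournier--Guillin bound, Theorem \ref{thm:process conv}, and the Lipschitz structure of the coefficients assumed in (PDE); no fundamentally new probabilistic input beyond what is already developed in the preceding sections is needed.
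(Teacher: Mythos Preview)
Your approach matches the paper's: Feynman--Kac representations for both $v^{i,n}$ (via Pardoux--Peng applied to the $n$-particle FBSDE) and $V$ (via \cite{ChassagneuxCrisanDelarue_Master}), forward propagation of chaos to control the $X$-processes at rate $\varepsilon_n$, backward BSDE stability to transfer this error, and Lipschitz continuity of $V$ in the measure argument plus Fournier--Guillin for \eqref{eq:PDE linear inter empi}. One clarification worth noting: you cannot invoke Theorem~\ref{thm:process conv} as a black box, because there the terminal data $G^i$ are i.i.d.\ and the generator depends on $\cL(Y)$ only, whereas here both depend on the \emph{interacting} forward particles $X^{i,n}$ and $L^n(\X)$; the paper therefore reruns the It\^o--Gronwall estimates of that proof carrying along the additional forward-error terms $\delta^n_s=|X^{1,n}_s-X^1_s|^2$ and $\eta^n_s=\cW_2^2(L^n(\X_s),\cL(X_s))$, and in particular bounds $E[\cW_2^2(L^n(\Y_s),\cL(Y_s))]$ \emph{directly} by these forward errors (not via Theorem~\ref{thm:mom bound}), which is exactly what makes the final rate $\varepsilon_n$ in the forward dimension $d$ rather than $r_{n,m,q,2}$ in the backward dimension $m$.
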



\section{Proofs}
\label{sec:proofs}
In this final section we give detailed proofs of the results presented above. 
We start by justifying well-posedness of \eqref{eq:MkV BSDE} and \eqref{eq:bsde system}.
For simplicity of notation, we will put
\begin{equation*}
	Z^{i,n} := Z^{i,i,n}
\end{equation*}
whenever this does not cause confusions.
\begin{proof}[Proof of Remark \ref{rem:existence}]
	By \cite[Theorem 4.23]{MR3752669}, the equation \eqref{eq:MkV BSDE} admits a unique solution $(Y,Z)$ solution in the space  ${\mathcal S}^2(\RR^m) \times {\mathcal H}^2(\mathbb{R}^{m \times d})$, where we use the notation: for each integer $k \ge 1$,
	$$
		{\mathcal S}^2(\mathbb{R}^{k}) := \left\{ Y \in \HH^{0}(\mathbb{R}^{k}) \,\Big|\, E \sup_{0 \le t \le T} |Y_t|^2 < +\infty \right\}\quad\text{and}\quad {\mathcal H}^2(\mathbb{R}^{k}) := \left\{ Z \in \HH^{0}(\mathbb{R}^{k}) \,\Big|\, E \int_0^T |Z_t|^2 dt < +\infty \right\},
	$$
	with ${\mathcal H}^{0}(\mathbb{R}^{k})$ being the space of all $\RR^k$-valued progressively measurable processes.

	Moreover, we can apply \cite[Theorem 2.1]{karoui01} to the system \eqref{eq:bsde system} to justify the existence and uniqueness of a solution.
	To that end, it helps to write it in the more  compact form
	\begin{equation}
	\label{eq:bsde system_vector}
		\bY_t = \bG +\int_t^T \bF_u(\bY_u, \bZ_u)\,du - \int_t^T \bZ_u \, d \bW_u, 
	\end{equation}
	where $\bG(\boldsymbol\omega) := (G^1(\omega^1), \dots, G^n(\omega^n))$, $\boldsymbol\omega := (\omega^1,\dots, \omega^n),$
	$$
		\bY := (Y^{i,n})_{i=1,\dots,n}, \qquad  \bZ^{i,n}:= (Z^{i,1,n}, \dots, Z^{i,n,n}),  \quad 
		\qquad \bW = (W^i)_{i=1,\dots,n}
	$$
	 $\bZ:= \mathrm{diag}(\bZ^{i,n})_{i=1,\dots,n}$
	and  $\bF : [0,T]\times \Omega^n \times (\mathbb{R}^m)^n\times (\mathbb{R}^{m\times d})^n \to (\mathbb{R}^m)^{n}$ is defined by
	$$
		\bF(t,\boldsymbol \omega, \by, \bz) = (F_t(\omega^i,y^{i}, z^{i,i}, L^n(\by)))_{i=1,\dots,n}
	$$
	for  $t \in [0,T], \by = (y^1, \dots, y^n) \in (\mathbb{R}^m)^n, \bz = (z^1, \dots, z^n) \in (\mathbb{R}^{m\times d})^n$ and $\boldsymbol \omega:= (\omega^1, \dots, \omega^n) \in \Omega^n$.
	Now it suffices to check that the function $\bF$ is $(8L_F)$-Lipschitz continuous (the Lipschitz constant does not depend on $n$).
	We do this for the reader's convenience.
	For every $t \in [0,T], \by_1 ,\by_2 \in (\mathbb{R}^m)^n, \bz_1,\bz_2  \in (\mathbb{R}^{m\times d})^n$ it holds that
	\begin{align*}
		| \bF(t, \by_1, \bz_1) - \bF(t, \by_2, \bz_2) |^2
		&= \sum_{i=1}^n | F_t(y^{i}_1, z^{i,i}_1, L^n(\by_1)) - F_t(y^{i}_2, z^{i,i}_2, L^n(\by_2)) |^2
		\\
		&\le L_F \sum_{i=1}^n \left( |y^{i}_1 - y^{i}_2| + |z^{i,i}_1 - z^{i,i}_2| +  \cW_p(L^n(\by_1), L^n(\by_2)) \right)^2
		\\
		&\le L_F \sum_{i=1}^n \left( |y^{i}_1 - y^{i}_2| + |z^{i,i}_1 - z^{i,i}_2| +  \cW_2(L^n(\by_1), L^n(\by_2)) \right)^2
		\\
		&\le 4 L_F \sum_{i=1}^n \left( |y^{i}_1 - y^{i}_2|^2 + |z^{i,i}_1 - z^{i,i}_2|^2 +  \frac{1}{n}  \sum_{j=1}^n |y^{j}_1 - y^j_2|^2 \right)
		\\
		&\le 8 L_F \left( |\by_1 - \by_2|^2 + |\bz_1 - \bz_2|^2 \right).
	\end{align*}
	To derive these inequalities, we successively used assumption (Lip$_p$), the fact that $\cW_p(\mu,\nu) \le \cW_2(\mu,\nu)$ for any $p \in [1,2]$, and the fact that $\cW_2$, for two $n$-sample empirical distributions $L^n(\by_1), L^n(\by_2)$ is given by
	$$
		\cW_2(L^n(\by_1), L^n(\by_2))
		=
		\min_\sigma \left( \frac{1}{n} \sum_{i=1}^n |y^{\sigma(i)}_1 - y^i_2|^2 \right)^{1/2},
	$$
	where the minimum is over permutations $\sigma$ of $\{1,\dots,n\}$, see e.g. \cite[Lemma 5.1.7]{CardLect18}.
\end{proof}

\subsection{Proofs for Subsection \ref{sec:quant chaos}}
We begin with two moment estimates for the solution of the McKean-Vlasov BSDE.
Given a square integrable progressive process $q$, we will denote by
\begin{equation*}
	{\mathcal E}_{s,t}(q\cdot W) := \exp\left(\int_s^tq_u\,dW_u - \frac12\int_s^t|q_u|^2\,du \right)
\end{equation*}
the stochastic exponential of $q$, and for every measure $\mu \in {\mathcal{P}}_p(\RR^m)$, we let 
\begin{equation*}
	M_p(\mu):= \int_{\mathbb{R}^m}|x|^p\,d\mu
\end{equation*}
be the $p^{th}$-moment of $\mu$.
\begin{lemma}
\label{lem: moment d+5}
	Let $k\ge2$. Assume $E[|G|^{k}]<\infty$
	and that $F$ satisfies (Lip$_p$) for some $p \in [1,2]$.
	Then, for all $q\ge0$, if $q=2$ or $q<k$, the solution $(Y,Z)$ of \eqref{eq:MkV BSDE} satisfies
	\begin{equation*}
		\sup_tE[|Y_t|^q]<\infty.
	\end{equation*}
\end{lemma}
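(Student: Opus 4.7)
I would split the proof into two cases, $q = 2$ and $q \in (2,k)$; the range $q \in [0,2)$ then follows from $q = 2$ by Jensen's inequality. For $q = 2$, applying Itô's formula to $|Y_s|^2$ on $[t,T]$ and taking expectations (the local-martingale part vanishes since $(Y,Z) \in \mathcal S^2 \times \mathcal H^2$ by Remark~\ref{rem:existence}) yields
\begin{equation*}
E[|Y_t|^2] + E\!\!\int_t^T |Z_s|^2 \, ds = E[|G|^2] + 2 E\!\!\int_t^T Y_s \cdot F_s(Y_s, Z_s, \cL(Y_s))\, ds.
\end{equation*}
The linear-growth part of (Lip$_p$) gives $|Y_s \cdot F_s| \le L_F |Y_s|(1 + |Y_s| + |Z_s| + E[|Y_s|^p]^{1/p})$, since $M_p(\cL(Y_s))^{1/p} = E[|Y_s|^p]^{1/p}$. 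A Young inequality absorbs a small multiple of $|Z_s|^2$ into the left-hand side, and Jensen's inequality gives $E[|Y_s|^p]^{1/p} \le E[|Y_s|^2]^{1/2}$ since $p \in [1,2]$. Backward Gronwall applied to $E[|Y_t|^2] \le E[|G|^2] + C\!\int_t^T E[|Y_s|^2]\, ds$ then yields $\sup_t E[|Y_t|^2] < \infty$, and Jensen takes care of all $q \in [0,2]$.

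For $q \in (2,k)$, I would linearize the generator. Writing $F_s(Y_s, Z_s, \cL(Y_s)) = \eta_s + \alpha_s Y_s + \beta_s Z_s$ with $\eta_s := F_s(0,0,\cL(Y_s))$ and $\alpha, \beta$ adapted processes bounded by $L_F$ in operator norm (constructed via a standard coordinate-wise difference quotient), the $q = 2$ step tells us that $|\eta_s| \le L_F(1 + E[|Y_s|^p]^{1/p})$ is a bounded deterministic function of $s$. The resulting linear BSDE admits the classical representation
\begin{equation*}
Y_t = E\!\left[\left.\Gamma_{t,T}\, G + \int_t^T \Gamma_{t,s}\, \eta_s \, ds\,\right|\mathcal F_t \right], \qquad \Gamma_{t,s} := \exp\!\left(\int_t^s \alpha_u \, du\right) \cE_{t,s}(\beta \cdot W),
\end{equation*}
so that, using the martingale property $E[\cE_{t,s}(\beta \cdot W)\,|\,\mathcal F_t] = 1$ and the deterministic bound on $\eta_s$, one obtains the pointwise bound $|Y_t| \le e^{L_F T}\, E[\cE_{t,T}(\beta \cdot W)\, |G|\,|\,\mathcal F_t] + C(L_F, T)$.

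Raising to the $q$-th power, taking $P$-expectation, and applying conditional Jensen ($q\ge 1$) followed by Hölder's inequality with conjugate exponents $(k/(k-q), k/q)$ yields
\begin{equation*}
E[|Y_t|^q] \le C\, E\!\left[\cE_{t,T}(\beta \cdot W)^{qk/(k-q)}\right]^{(k-q)/k} E[|G|^k]^{q/k} + C.
\end{equation*}
Since $\beta$ is bounded by $L_F$, all $L^r$-moments of the stochastic exponential are finite, and the right-hand side is uniformly bounded in $t$, which is the desired conclusion. The main obstacle I foresee is the clean linearization in the vector-valued setting, but the key point that makes the whole argument go through is that once $\cL(Y)$ is treated as given, the source term $\eta_s$ is a bounded deterministic function of time — this converts the McKean--Vlasov BSDE into a genuinely linear equation with bounded inhomogeneity, so no second Gronwall iteration is needed at the $L^q$ level.
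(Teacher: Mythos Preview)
Your proposal is correct and follows essentially the same route as the paper: absorb the $Z$-dependence by a Girsanov change of measure, then use H\"older against the stochastic exponential (which has all moments since the drift is bounded by $L_F$) to trade up to the $k$-th moment of $G$. The differences are minor. For $q=2$ the paper simply invokes Remark~\ref{rem:existence}: existence already gives $Y\in\mathcal S^2(\RR^m)$, so $\sup_t E[|Y_t|^2]\le E[\sup_t|Y_t|^2]<\infty$ and your It\^o/Gronwall step there is redundant. For $q\in(2,k)$ the paper linearizes only in $z$, keeps $F_u(Y_u,0,\cL(Y_u))$ as the inhomogeneity, and then runs Gronwall in $|Y_u|^q$ under the Girsanov measure $Q$; you instead linearize in $y$ as well, which simply packages that Gronwall step into the bounded factor $\exp(\int_t^s\alpha_u\,du)$ of the linear-BSDE representation. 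Both versions rely on exactly the same observation you single out---that once $\cL(Y_u)$ is fixed, the $q=2$ case makes $M_p(\cL(Y_u))$ (hence the source term) uniformly bounded---and both face the same technicality you flag in the vector-valued case $m>1$, on which the paper does not dwell either.
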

\begin{proof}
	If $q = 2$, there is nothing to prove because
	the result follows from Remark \ref{rem:existence}. 
	Let us assume $q<k$.
	Since $F$ is Lipschitz continuous in the $z$-variable, it is almost everywhere differentiable. 
	Thus, it follows from the mean-value theorem that
	\begin{align*}
		Y_t &= G + \int_t^T \Big( F_u(Y_u, Z_u, \cL(Y_u)) - F_u(Y_u,0,\cL(Y_u)) + F_u(Y_u,0,\cL(Y_u)) \Big)\,du - \int_t^TZ_u\,dW_u\\
		&= G + \int_t^T \left( \int_0^1\partial_z F_u(Y_u, \lambda Z_u,\cL(Y_u))\,d\lambda Z_u + F_u(Y_u, 0, \cL(Y_u)) \right)\,du - \int_t^TZ_u\,dW_u\\
		&= E_Q\left[G + \int_t^T F_u(Y_u,0,\cL(Y_u))\,du \mid {\mathcal{F}}_t  \right]
	\end{align*}
	where we used Girsanov's theorem, with $Q$ being the probability measure with density $dQ/dP:= {\mathcal E}_{0,T}(\gamma\cdot W)$ and
	\begin{equation*}
	 	\gamma_u:= \int_0^1\partial_z F_u(Y_u, \lambda Z_u,\cL(Y_u))\,d\lambda.
	 \end{equation*}
	 Hence, using the linear growth of $F$ we obtain 
	  \begin{align*}
	 	|Y_t|^q \le C_{F,k,q}E_Q\left[|G|^q +  \int_t^T\Big(1 + |Y_u|^q+M_2^{q/2}(\cL(Y_u)) \Big)\,du \mid {\mathcal{F}}_t \right]
	 \end{align*}
	 for a constant $C_{F,k,q}$ depending only on $k$ and $F$.
	 Thus, by Gronwall's inequality we have
	 \begin{align*}
	 	|Y_t|^q 
		&\le e^{C_{F,k,q}T}C_{F,k,q}E_Q\left[ |G|^q + \sup_{u\in[0,T]}M_2(\cL(Y_u))^{q/2} + T\mid {\mathcal{F}}_t \right]
		\\
	 			&\le e^{C_{F,k,q}T}C_{F,k,q}E\left[{\mathcal E}_{t,T}(\gamma\cdot W)^{k/(k-q)}\mid {\mathcal{F}}_t\right]^{(k-q)/k} E\left[|G|^{k}\mid {\mathcal{F}}_t \right]^{q/k}
				\\
	 			&\quad + C_{F,k,q}(T + \sup_{u\in[0,T]}M_2(\cL(Y_u))^{q/2}).
	 \end{align*}
	 Note that, since $\gamma$ is a bounded process, the random variable ${\mathcal E}_{t,T}(\gamma\cdot W)$ has moments of all orders.
	 Furthermore, $\sup_{u\in[0,T]}M_2(\cL(Y_u))<\infty$ since $Y\in {\mathcal S}^2(\RR^m)$, see  Remark \ref{rem:existence}. 
	 Therefore, taking expectation on both sides and applying again H\"older's inequality concludes the argument. 
\end{proof}

\begin{lemma}
\label{lem:estimation difference}
	Let $k \ge 3$. Assume that $E[|G|^{k'}]<\infty$ for some $k'>2k$ and that $F$ satisfies (Lip$_p$) for some $p \in [1,2]$.
	Assume that the solution $(Y,Z)$ of \eqref{eq:MkV BSDE} is such that $\sup_tE[|Z_t|^{2k}]< \infty$. 
	Let $0 \le t_{1} \le t_{2} \le T$ be such that $t_{2}-t_{1} \le1$. Then we have
	\begin{itemize}
		\item[(i)]  
		$E[|Y_t- Y_s|^k|Y_s - Y_r|^k]\le C|t-r|^2$ for some $C\ge0$ and for all $t_{1}\le r< s<t\le t_{2}$.
		\item[(ii)] 
		$E[|Y_t - Y_s|^k] \le C|t-s|$ for some $C\ge0$ and for every $t_{1}\le s\le t\le t_{2} $.
	\end{itemize}
\end{lemma}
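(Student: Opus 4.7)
The plan is to exploit the BSDE representation
\[
	Y_t - Y_s \;=\; -\int_s^t F_u(Y_u, Z_u, \cL(Y_u))\,du \;+\; \int_s^t Z_u\,dW_u,
\]
which holds for any $s \le t$, and then estimate each piece with elementary tools (H\"older on the drift part, Burkholder--Davis--Gundy on the martingale part), feeding in a priori moment bounds for $Y$, $Z$ and $F$.

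First I would establish the key a priori bound $\sup_{u\in[0,T]} E[|Y_u|^{2k}] < \infty$. Since $k' > 2k$ by hypothesis, Lemma~\ref{lem: moment d+5} applied with exponent $2k$ gives this. Combined with the standing assumption $\sup_u E[|Z_u|^{2k}] < \infty$ and the linear growth of $F$ from (Lip$_p$), I also get
\[
	K_{2k} \;:=\; \sup_{u\in[0,T]} E\!\left[|F_u(Y_u,Z_u,\cL(Y_u))|^{2k}\right] \;<\; \infty.
\]

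For part (ii), fix $s\le t$ in $[t_1,t_2]$. Using $(a+b)^k \le 2^{k-1}(a^k + b^k)$, Jensen's inequality for the drift term, and BDG together with Jensen applied to the $L^{k/2}$-norm of $\int_s^t |Z_u|^2\,du$ (here I use $k \ge 3 > 2$), I obtain
\[
	E[|Y_t - Y_s|^k] \;\le\; C_k\Bigl(|t-s|^{k-1} \int_s^t E[|F_u|^k]\,du \;+\; |t-s|^{k/2 - 1}\int_s^t E[|Z_u|^k]\,du\Bigr) \;\le\; C\bigl(|t-s|^k + |t-s|^{k/2}\bigr).
\]
Since $|t-s| \le t_2 - t_1 \le 1$ and $k/2 \ge 3/2 \ge 1$, both terms are bounded by $C|t-s|$; in fact we get the sharper $E[|Y_t-Y_s|^k] \le C|t-s|^{k/2}$, which will be useful below.

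For part (i), I would apply Cauchy--Schwarz on the disjoint intervals $[r,s]$ and $[s,t]$:
\[
	E\bigl[|Y_t - Y_s|^k |Y_s - Y_r|^k\bigr] \;\le\; \bigl(E[|Y_t-Y_s|^{2k}]\bigr)^{1/2}\bigl(E[|Y_s-Y_r|^{2k}]\bigr)^{1/2}.
\]
Running the argument of (ii) at level $2k$ instead of $k$ (the moment hypotheses allow this) yields
$E[|Y_t-Y_s|^{2k}] \le C|t-s|^k$, and likewise for $[r,s]$. Hence the product is bounded by $C\bigl(|t-s|\,|s-r|\bigr)^{k/2}$. By AM--GM, $|t-s|\,|s-r| \le (|t-r|/2)^2$, so
\[
	E[|Y_t-Y_s|^k |Y_s - Y_r|^k] \;\le\; C\,2^{-k}|t-r|^k \;\le\; C|t-r|^2,
\]
the last inequality using $k\ge 3$ and $|t-r|\le 1$.

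The only genuinely delicate step is verifying the uniform moment bounds on $Y$ and $F$ needed before the scaling argument can be applied at exponent $2k$; this is exactly where the assumption $k' > 2k$ (rather than merely $k' > k$) in Lemma~\ref{lem: moment d+5} is used. Everything else is standard H\"older/BDG bookkeeping, so I do not expect additional obstacles.
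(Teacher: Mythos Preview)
Your proof is correct. Part (ii) is essentially identical to the paper's argument. For part (i), however, you take a genuinely different route: the paper expands the product $|Y_t-Y_s|^k|Y_s-Y_r|^k$ directly via the BSDE representation, obtaining four cross terms (drift$\times$drift, two mixed drift$\times$martingale terms, and martingale$\times$martingale), bounds each with H\"older/BDG, and arrives at the intermediate estimate $C|t-r|^{k-1}$. You instead decouple the two factors by Cauchy--Schwarz and reduce (i) to the single-increment bound of (ii) at exponent $2k$, getting $C|t-r|^{k}$. Both routes require exactly the same moment inputs ($2k$-th moments of $Y$ and $Z$, whence the condition $k'>2k$), and both intermediate powers dominate $|t-r|^2$ once $k\ge 3$ and $|t-r|\le 1$. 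Your argument is a little cleaner and avoids tracking four separate terms; the paper's direct expansion gives a marginally sharper intermediate exponent, but this buys nothing for the stated conclusion.
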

\begin{proof}
	Let us start with the proof of (i).
	A direct estimation and repeated applications of H\"older's inequality yield 
	\begin{align*}
		&E\left[|Y_t - Y_s|^k|Y_s-Y_r|^k\right]\\
		 	&\le 2^{2k-2}(s-r)^{(k-1)}(t-s)^{(k-1)}E\left[\int_r^s| F_u(Y_u, Z_u, \cL(Y_u))|^k\,du\int_s^t| F_u(Y_u, Z_u, \cL(Y_u))|^k\,du \right]\\
		 	&\quad + 2^{2k-2}(s-r)^{(k-1)}E\left[\int_r^s| F_u(Y_u, Z_u, \cL(Y_u))|^{k}\,du\left|\int_s^tZ_u\,dW_u\right|^{k} \right]\\
		 	&\quad + 2^{2k-2}(t-s)^{(k-1)}E\left[\int_s^t| F_u(Y_u, Z_u, \cL(Y_u))|^{k}\,du\left|\int_r^sZ_u\,dW_u\right|^{k} \right]\\
		 	&\quad
		 	 + 2^{2k-2} E\left[\left|\int_r^sZ_u\,dW_u\right|^{2k}\right]^{1/2}E\left[\left|\int_s^tZ_u\,dW_u\right|^{2k} \right]^{1/2}.
	\end{align*}
	Now, recall that by Lemma \ref{lem: moment d+5}, it holds $\sup_{t\in [0,T]}E[|Y_t|^{2k}]<\infty$, $\sup_{t\in[0,T]}M_2(\cL(Y_t))<\infty$ and by assumption, that $\sup_tE[|Z_t|^{2k}]< \infty$.
	Thus, by the linear growth condition on $F$ and Burkholder-Davis-Gundy inequality, we have 
	\begin{align*}
		&E\left[|Y_t - Y_s|^k|Y_s-Y_r|^k\right]\\
		&\le 2^{2k}L_FT(t-r)^{2(k-1)} E\left[\int_0^T \left(1 + |Y_u|^{2k} + |Z_u|^{2k}+M_2^{2k}(\cL(Y_u)) \right)\,du\right]\\
		&\quad + 2^{2k}L_F(t-r)^{(k-1)}E\left[\int_r^s\left(1 + |Y_u|^{2k}+|Z_u|^{2k}+M_2^{2k}(\cL(Y_u)) \right) \right]^{1/2}E\left[\left(\int_s^t|Z_u|^2\,du \right)^{k} \right]^{1/2}\\
		&\quad + 2^{2k}L_F(t-r)^{(k-1)}E\left[\int_s^t \left(1 + |Y_u|^{2k}+|Z_u|^{2k}+M_2^{2k}(\cL(Y_u)) \right) \right]^{1/2}E\left[\left(\int_r^s|Z_u|^2\,du \right)^{k} \right]^{1/2}\\
		&\quad + 2^{2k} E\left[\left(\int_r^s|Z_u|^2\,du \right)^{k} \right]^{1/2}E\left[\left(\int_s^t|Z_u|^2\,du \right)^{k} \right]^{1/2}\\
		&\le 2^{2k}C_{F,T} (t-r)^{2(k-1)} + 2^{2k}C_{F,T}(t-r)^{(k-1)}(s-t)^{k-1} + 2^{2k}C_{F,T}(t-r)^{(k-1)}(t-s)^{k-1}\\
		&\quad + C_{F,T}2^{2k}\left[ (s-r)^{k-1}(t-s)^{k-1} \right]^{1/2}
		\\
		&\le C_{F,T}(t-r)^{k-1}.
	\end{align*}
	Since $k\ge3$ and $t-r\le 1$, we can conclude from the above that
	\begin{equation*}
		E\left[|Y_t - Y_s|^k|Y_s-Y_r|^k\right] \le C_{k,T,F} |t-r|^2,
	\end{equation*}
	where $C_{k,T,F} $ is a constant depending on $k$, $T$, $L_F$ and the $2k^{th}$ moments of $Y$ and $Z$. 
	This proves the first claim.

	Let us turn to the proof of the second claim, which is similar (and simpler).
	In fact, arguing as above we get
	\begin{align*}
		E\left[|Y_t-Y_s|^k\right] &\le 2^{k-1}E\left[ \left|\int_s^t F_u(Y_u, Z_u, \cL(Y_u))\,du\right|^k + \left| \int_s^tZ_u\,dW_u\right|^k \right]\\
		&\le 2^k(t-s)^{(k-1)}E\left[ \int_s^t| F_u(Y_u, Z_u, \cL(Y_u))|^k\,du\right] + 2^kE\left[\left(\int_s^t|Z_u|^2\,du \right)^{k/2} \right]\\
		&\le 2^kL_F(t-s)^{(k-1)} + 2^kC (t-s)^{(k-1)/2} 
		\le C_{F, T}|t-s|, 
	\end{align*}
	where we used the facts that $t-s\le 1$ and $k \ge 3$.

	In the case that $F$ is bounded in $z$ the argument is exactly the same. 
	Since the terms $|Z_t|^{2k}$ will not appear in the estimates we can conclude without assumptions on the moments of $Z$. 
\end{proof}

Next, we will adapt to BSDEs a well-known coupling technique that will allow to use some known quantitative bounds for i.i.d. samples in our interacting particles case.
This coupling technique, which probably originated from the work of \citet{MR1108185}, is by now standard in SDE theory, see e.g. \cite{Del-Lac-Ram_Concent,carda15} for recent references.
Hence, let $(Y,Z)$ be the solution of the McKean-Vlasov BSDE~\eqref{eq:MkV BSDE}.
Let $(\tilde Y^1,\tilde Z^1), \dots (\tilde Y^n, \tilde Z^n)$ be i.i.d. copies of $(Y,Z)$ such that for each $i$, $(\tilde Y^i,\tilde Z^i)$ solves the equation
\begin{equation}
\label{eq:def_tildeYi}
	\tilde Y_t^i = G^i +\int_t^T F_u(\tilde Y_u^i,\tilde Z_u^i, \cL(Y_u))\,du - \int_t^T\tilde Z_u^i\,dW_u^i. 
\end{equation} 
Such copies can be found because the McKean-Vlasov BSDE has a unique solution, and thus we have uniqueness in law. We let $\tilde \Y = (\tilde Y^1, \dots, \tilde Y^n)$. 
\begin{lemma}
\label{lem: fundamental lemma}
	Let $p \in [0,1]$. Assume that $E[|G^1|^2]<\infty$ and that $F$ satisfies (Lip$_p$).
	Then it holds that
	\begin{equation}
	\label{eq:fundamental inequality}
		{\mathcal W}_p(L^n(\Y_t), \cL(Y_t)) \le \exp(Te^{L_FT}) {\mathcal W}_p(L^n(\tilde \Y_t), \cL(Y_t))\quad P\text{-a.s}.
	\end{equation}
	$\text{for all } (t,n) \in [0,T]\times\mathbb{N}$.
\end{lemma}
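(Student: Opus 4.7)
The natural approach combines triangle inequality with a direct coupling. Using the identity coupling $\pi = \frac{1}{n}\sum_i \delta_{(Y^{i,n}_t, \tilde Y^i_t)}$ gives
\begin{equation*}
\cW_p^p(L^n(\Y_t), L^n(\tilde \Y_t)) \le \frac{1}{n}\sum_{i=1}^n |D^i_t|^p, \qquad D^i := Y^{i,n} - \tilde Y^i,
\end{equation*}
so that, combined with $\cW_p(L^n(\Y_t), \cL(Y_t)) \le \cW_p(L^n(\Y_t), L^n(\tilde \Y_t)) + \cW_p(L^n(\tilde \Y_t), \cL(Y_t))$, the problem reduces to a pointwise BSDE-type estimate on $(n^{-1}\sum_i |D^i_t|^p)^{1/p}$.

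For each $i$, subtracting \eqref{eq:def_tildeYi} from the $i$-th line of \eqref{eq:bsde system}, the process $D^i$ solves a BSDE driven by $(W^1,\dots,W^n)$ whose generator increment I would linearize by the standard finite-difference recipe into $\alpha^i_u D^i_u + \beta^i_u (Z^{i,i,n}_u - \tilde Z^i_u) + R^i_u$, where $|\alpha^i|, |\beta^i| \le L_F$ by (Lip$_p$) and the residual absorbs the dependence on the measure argument, so $|R^i_u| \le L_F \cW_p(L^n(\Y_u), \cL(Y_u))$. The crucial structural observation is that the $i$-th generator depends on $Z$ only through $Z^{i,i,n}$, so Girsanov can be applied to $W^i$ alone (with drift $\beta^i$), leaving the $W^k$ for $k\ne i$ unchanged. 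Under the resulting probability $Q^i$, the driver of the difference BSDE reduces to $\alpha^i_u D^i_u + R^i_u$; using the integrating factor $\exp(-\int_0^\cdot \alpha^i_u\,du)$ together with $D^i_T = 0$ yields the Feynman--Kac type representation
\begin{equation*}
|D^i_t| \le L_F e^{L_F T}\, E^{Q^i}\Big[\int_t^T \cW_p(L^n(\Y_s), \cL(Y_s))\,ds \,\Big|\, \cF^n_t\Big].
\end{equation*}

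To close the loop, I would bound $\cW_p(L^n(\Y_s), \cL(Y_s)) \le \cW_p(L^n(\Y_s), L^n(\tilde\Y_s)) + \cW_p(L^n(\tilde\Y_s), \cL(Y_s))$ once more, substitute it into the above, average over $i$, and apply the backward Gronwall inequality; the factor $\exp(Te^{L_FT})$ arises precisely from Gronwall with rate $e^{L_FT}$ over the horizon $T$. The main technical subtlety is the \emph{pointwise} $P$-a.s. nature of the claim: the Girsanov step produces $n$ different conditional expectations under the measures $Q^i$ rather than a pathwise inequality. Reconciling this typically requires exploiting that each $\beta^i$, and hence each $dQ^i/dP$, is pathwise bounded, and that $\cW_p(L^n(\Y_s), \cL(Y_s))$ depends on $\omega$ only through the $n$ particle trajectories; a careful conditioning argument (or domination of $E^{Q^i}[\,\cdot\,|\cF^n_t]$ via the essential boundedness of the Girsanov density) then preserves the almost-sure character. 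This conditioning step, rather than Gronwall itself, is where the bulk of the technical work lies.
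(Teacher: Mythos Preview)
Your approach is essentially the paper's: linearize the $z$-dependence via the mean-value theorem, apply Girsanov on $W^i$ alone to obtain the bound $|D^i_t|\le e^{L_FT}E_{Q^i}[\int_t^T\cW_p(L^n(\Y_u),\cL(Y_u))\,du\mid\cF^n_t]$, feed this into the triangle inequality, and close with Gronwall to produce the factor $\exp(Te^{L_FT})$. The only organizational difference is that the paper does \emph{not} split $\cW_p(L^n(\Y_s),\cL(Y_s))$ a second time inside the integral; it writes the self-referential inequality \eqref{eq: lem fund before gronwall} and invokes Gronwall directly. The subtlety you flag---that one obtains $n$ different conditional expectations under $n$ different measures $Q^i$, yet the claim is $P$-a.s.---is genuine, and the paper is in fact less scrupulous about it than you are: it simply writes a single $E_Q$ throughout and applies Gronwall without further comment.
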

\begin{proof}
	Let $i \in \{ 1, \dots, n\}$ be fixed.
	It follows by the mean-value theorem that 
	\begin{align*}
		\tilde Y^i_t - Y^{i,n}_t &= \int_t^T \left[ F_u(\tilde Y^i_u,\tilde Z^i_u, \cL(Y_u)) - F_u(Y^{i,n}_u, Z^{i,i,n}_u, L^n(\Y_u)) \right]\,du - \sum_{j=1}^n\int_t^T \left(\delta_{ij}\tilde Z^i_u - Z^{i,j,n}_u \right) \,dW_u^j \\
		&=\int_t^T \left[ \alpha^i_u + \beta^i_u + \int_0^1\partial_z F_u( Y^{i,n}_u,  Z^{i,i,n}_u + \lambda (\tilde Z^i_u -Z^{i,i,n}_u), \cL(Y_u)) \,d\lambda(\tilde Z^i_u - Z^{i,i,n}_u) \right] \,du\\
		&\quad - \sum_{j=1}^n\int_t^T\left(\delta_{ij}\tilde Z^i_u - Z^{i,j,n}_u \right) \,dW_u^j
	\end{align*}
	with $\delta_{ij}=1$ if $i=j$ and $\delta_{ij}=0$ otherwise, $\alpha^i_u:=  F_u(\tilde Y^i_u,\tilde Z^i_u, \cL(Y_u)) - F_u(Y^{i,n}_u,\tilde Z^i_u, \cL(Y_u)) $ and $\beta^i_u:=  F_u( Y^{i,n}_u, Z^{i,i,n}_u,\cL(Y_u)) -  F_u( Y^{i,n}_u, Z^{i,i,n}_u, L^n(\Y_u))$.
	Note that since $F$ is Lipschitz continuous, the derivative $\partial_zF$ can be defined almost everywhere, and is bounded.
	Thus, the density process 
	\begin{equation*}
		{\mathcal E}_{0,t}(\gamma\cdot W^i) \quad \text{with}\quad \gamma_u:=\int_0^1\partial_z F_u( Y^{i,n}_u,  Z^{i,i,n}_u + \lambda (\tilde Z^i_u-Z^{i,n}_u), \cL(Y_u)) \,d\lambda
	\end{equation*}
	defines an equivalent probability measure $Q$.
	Due to Girsanov's theorem and square integrability of $Z^{i,j,n}$ and $\tilde Z^i$, taking the expectation above with respect to $Q$ yields
	\begin{align}
	\label{eq:alpha beta estimate}
		\tilde Y^i_t - Y_t^{i,n} = E_Q\left[\int_t^T (\alpha^i_u + \beta^i_u ) \,du \mid {\mathcal{F}}_t^n \right].
	\end{align}
	Again by Lipschitz continuity of $F$, it holds that
	\begin{equation*}
		|\alpha^i_u| + |\beta^i_u| \le L_F\left(|\tilde Y_u^i - Y^{i,n}_u| + {\mathcal W}_p(L^n(\Y_u), \cL(Y_u)) \right)	
	\end{equation*}	
	so that by Gronwall's inequality, we have
	\begin{equation*}
		|\tilde Y^i_t - Y_t^{i,n}| \le e^{L_FT}E_Q\left[\int_0^T{\mathcal W}_p(L^n(\Y_u), \cL(Y_u))\,du \mid {\mathcal{F}}_t^n \right].
	\end{equation*}
	Hence, using the definition of the $p^{th}$-Wasserstein distance, we obtain the estimate
	\begin{align*}
		{\mathcal W}_p(L^n(\Y_t), L^n(\tilde \Y_t))\le \left( \frac 1n\sum_{i=1}^n|\tilde Y^i_t - Y^i_t|^p\right)^{1/p} \le e^{L_FT}E_Q\left[\int_0^T{\mathcal W}_p(L^n(\Y_u), \cL(Y_u))\,du \mid {\mathcal{F}}_t^n \right].
	\end{align*}
	Now, combine this with the triangle inequality to obtain
	\begin{align}
	\nonumber
		{\mathcal W}_p(L^n(\Y_t),\cL(Y_t)) &\le {\mathcal W}_p(L^n(\Y_t), L^n(\tilde \Y_t)) + {\mathcal W}_p( L^n(\tilde \Y_t), \cL(Y_t)) \\
	\label{eq: lem fund before gronwall}
		&\le e^{L_FT}E_Q\left[\int_0^T{\mathcal W}_p(L^n(\Y_u),\cL(Y_u))\,du \mid {\mathcal{F}}_t^n \right] + {\mathcal W}_p(L^n(\tilde \Y_t), \cL(Y_t)).
	\end{align}
	Applying again Gronwall's inequality yields the desired result.
\end{proof}
With the proofs of the above lemmas aside, we are ready to prove quantitative estimations for the convergence of the empirical measure of $\Y_t$ to the law $\cL(Y_t)$ of the McKean-Vlasov BSDE.

\begin{proof}[Proof of Theorem \ref{thm:mom bound} and Proposition \ref{prop:mom bound sup}]
The proofs begin with Lemma \ref{lem: fundamental lemma}.
In fact this lemma implies that
\begin{equation}
\label{eq:just bef estim}
	E\left[ {\mathcal W}_p^p(L^n(\Y_t), \cL(Y_t))\right] \le \exp(Te^{L_FT})E\left[{\mathcal W}_p^p(L^n(\tilde \Y_t), \cL(Y_t)) \right]\quad \text{for all } (t,n) \in [0,T] \times \mathbb{N}.
\end{equation}
Since $E[|G^i|^k]<\infty $ with $k>p$, we have by Lemma \ref{lem: moment d+5} that $\sup_{t \in [0,T]} E[|Y_t|^q]<\infty$ for $q\in (p,k)$. Thus, it follows by \cite[Theorem 1]{Fou-Gui15} that
\begin{equation*}
	E\left[{\mathcal W}_p^p(L^n(\tilde \Y_t), \cL(Y_t)) \right]\le C r_{n,m,k,p}
\end{equation*}
for a constant $C$ depending on $L_F, T, m, p$ and $k$.
Therefore, the estimate \eqref{eq:mom bound} is obtained due to \eqref{eq:just bef estim}.%

To get the estimate \eqref{eq:sup mom bound}, let $0=t_0 \le t_1 \le \dots \le t_N =T$ be a partition of $[0,T]$ in $N$ intervals of length $t_{j+1} - t_j\le 1$ (if $T\le 1$ we simply take $N=1$).
Considering the decomposition
\begin{equation*}
	{\mathcal W}_p^p(L^n(\Y_t), \cL(Y_t)) = \mathcal{W}_p^p(L^n(\Y_0), \cL(Y_0)) \mathbf{1}_{\{0\}}(t) +  \sum_{j=0}^{N-1}{\mathcal W}_p^p(L^n(\Y_t), \cL(Y_t))1_{(t_j, t_{j+1}]}(t)\quad \text{for all } t \in [0,T],
\end{equation*}
we have
\begin{equation}
\label{eq:estim sum}
	E\Big[\sup_{t \in [0,T]}{\mathcal W}_p^p(L^n(\Y_t), \cL(Y_t))\Big] 
	\le 
	\sum_{j=0}^{N-1} E\Big[\sup_{t \in [t_j,t_{j+1}]}{\mathcal W}_p^p(L^n(\Y_t), \cL(Y_t)) \Big].
\end{equation}
From Lemma \ref{lem: fundamental lemma} we have

\begin{equation*}
	E\Big[\sup_{t \in [t_j, t_{j+1}]}{\mathcal W}_p^p(L^n(\Y_t), \cL(Y_t))\Big] 
	\le 
	\exp(p T e^{L_FT}) E\left[\sup_{t \in [t_j, t_{j+1}]}{\mathcal W}_p^p(L^n(\tilde \Y_t), \cL(Y_t))\right] \quad \text{for all } n \in \mathbb{N},\,\, j\le N-1.
\end{equation*}
Since $p \in [1,2]$, it follows by Jensen's inequality and the inequality ${\mathcal W}_p \le {\mathcal W}_2$ that for all $n \in \mathbb{N}$,
\begin{align}
\label{eq:sup mom almost bound}
	E\left[\sup_{t \in [t_{j},t_{j+1}]}{\mathcal W}_p^p(L^n(\Y_t), \cL(Y_t))\right]^{2/p} \le e^{2L_FT}E\left[\sup_{t \in [t_j,t_{j+1}]}{\mathcal W}_2^2(L^n(\tilde \Y_t), \cL(Y_t))\right]
\end{align}
Since $t_{j+1} - t_j\le 1$, Lemma \ref{lem:estimation difference} applies, in view of which it follows from Lemma \ref{lem: moment d+5} and \cite[Theorem 1.3]{Hor-Kar94} that
\begin{equation*}
	E\Big[\sup_{t \in [t_{j}, t_{j+1}]}{\mathcal W}_2^2(L^n(\tilde \Y_t), \cL(Y_t))\Big] \le C_{G,F,k,m}n^{-2/(m+8)}.
\end{equation*}
Therefore, we deduce from \eqref{eq:sup mom almost bound} and \eqref{eq:estim sum} that
\begin{equation*}
	E\Big[\sup_{t \in [0,T]}{\mathcal W}_p^p(L^n(\Y_t), \cL(Y_t))\Big] \le Ne^{pL_FT}C_{G,F,k,m} n^{-p/(m+8)} .
\end{equation*}
This concludes the proof, since $N$ can be chosen less than $T+1$.
\end{proof}
\begin{proof}[Proof of Remark \ref{rem: r vs m+5}]
	The proof is the same as that of the estimate \eqref{eq:mom bound} with application of \cite[Theorem 1.2]{Hor-Kar94} instead of \cite[Theorem 1]{Fou-Gui15}.
\end{proof}

\subsection{Proof of Theorem \ref{thm:concentration}}
\label{sec:proof conc}
The proofs of Theorem \ref{thm:concentration} and Proposition \ref{prop:lin inter} partially rely on functional inequalities that we now recall for the reader's convenience.
See however \cite[Chapters 21 $\&$ 22]{Vil2} for further details.

Let $\cW_{2,\delta}$ denote the Wasserstein distance of order 2 with respect to a distance $\delta$ on a Polish space $E$.
A probability measure $\mu \in \cP(E)$ is said to satisfy Talagrand's $T_2$ inequality with constant $C$ if
\begin{equation*}
	\cW_{2,\delta}(\mu,\nu) \le \sqrt{CH(\nu|\mu)} \quad \text{for every probability measure } \nu,
\end{equation*}
where $H$ is the Kullback-Leibler divergence defined as
\begin{equation*}
 	H(\nu|\mu) := \begin{cases}
 		\int \log(\frac{d\nu}{d\mu})d\nu, \quad \text{if } \nu \ll \mu\\
 		+\infty, \quad \text{otherwise},
 	\end{cases}
 \end{equation*}
 with the convention $E[X]:=\infty$ whenever $E[X^+] = \infty$.
Below, we will exploit the efficiency of Talagrand's inequality in deriving concentration inequalities, but also the fact that it implies other functional inequalities, notably the $T_1$ inequality
\begin{equation*}
	\cW_{1,\delta}(\mu,\nu) \le \sqrt{CH(\nu|\mu)} \quad \text{for every probability measure } \nu,
\end{equation*}
and Poincar\'e's inequality
\begin{equation*}
 	\mathrm{Var}(f) \le C\int_E|\nabla f|^2\,d\mu
 \end{equation*} 
 for every (weakly) differentiable function $f:E \to \mathbb{R}$ and where $\mathrm{Var}(f)$ is the variance with respect to the probability measure $\mu$.
\begin{proof}[Proof of Theorem \ref{thm:concentration}]
	The proof of the first concentration bound also uses Lemma \ref{lem: fundamental lemma}.
	For simplicity, let $C_{F,T} = \exp(Te^{L_FT})$ denote the constant factor appearing in the right hand side of~\eqref{eq:fundamental inequality}.
	It follows by \eqref{eq:fundamental inequality} that
	\begin{equation*}
		P\left({\mathcal W}_p(L^n(\Y_t),\cL(Y_t))\ge\varepsilon \right)\le P\left({\mathcal W}_p(L^n(\tilde \Y_t),\cL(Y_t))\ge \varepsilon / C_{F,T} \right).
	\end{equation*}
	Fix $k>2p$ such that $E[|G|^k]<\infty$, let $q\in (2p, k)$.
	By Lemma \ref{lem: moment d+5}, we have $E[|Y_t|^q]<\infty$ and thus we can apply \cite[Theorem 2]{Fou-Gui15}, to obtain the inequality
	\begin{equation*}
		P\left({\mathcal W}_p(L^n(\tilde \Y_t),\cL(Y_t))\ge \varepsilon_{F,T} \right) \le C(a_{n,\varepsilon_{F,T}}1_{\{\varepsilon_{F,T}\le 1\}} + b_{n,\varepsilon_{F,T}}),
	\end{equation*}
	where $\varepsilon_{F,T} = \varepsilon / C_{F,T}$.
	This concludes the proof of \eqref{eq:conent bound}.

	As for the proof of \eqref{eq:concent 2 proc}, 
	consider the representation of the system \eqref{eq:bsde system} given in \eqref{eq:bsde system_vector} on the probability space $(\Omega^n, \mathcal{F}^n,P)$. 
	Since $G^i$ is Lipschitz continuous for each $i$, it is easily checked that the random variable $\bG$ is again $L_G$-Lipschitz continuous.
	In fact, given $\boldsymbol\omega,\boldsymbol\theta\in \Omega^n$, we have
	\begin{align*}
	 	|\bG(\boldsymbol\omega) - \bG(\boldsymbol\theta)|^2 
		= \sum_{i=1}^n|G^i(\omega^i)-G^i(\theta^i)|^2  \le L_G\sum_{i =1}^n||\omega^i - \theta^i||^2_\infty 
		= L_G ||\boldsymbol\omega - \boldsymbol\theta||^2_\infty,
	 \end{align*} 
	where for every $\boldsymbol\omega \in \Omega^n$, $||\boldsymbol\omega||_\infty = \sup_{t \in [0,T]}\left(\sum_{i =1}^n|\omega^i(t)|^2\right)^{1/2}$. 
	Similarly, one shows that the function $\mathbf{F}$ is $L_F$-Lipschitz continuous.
	In particular, the Lipschitz constants of $\mathbf{F}$ and $\mathbf{G}$ do not depend on $n$.
	By \cite[Theorem 1.2]{T2bsde}, the law of $\Y_t$ satisfies Talagrand's $T_2$-inequality with the constant $$C_{F,G,T}:=2(L_G+TL_F)^2e^{2TL_F}.$$
	Thus, it follows by \cite[Theorem 1.3]{Gozlan09} that there is a constant $C>0$ such that for every $1$-Lipschitz continuous functions $f: C([0,T], \mathbb{R}^m)^n \to \mathbb{R}$ we have
	\begin{equation*}
	 	P\left(f(\Y)- E[f(\Y)]  \ge \varepsilon \right) 	\le e^{-\varepsilon^2C}.
	\end{equation*}
	The function $\boldsymbol{\omega}:=(\omega^1,\dots \omega^n)\mapsto\sqrt{n} {\mathcal W}_{2, ||\cdot||_\infty}(L^n(\boldsymbol\omega), \cL(Y))$ is $1$-Lipschitz continuous on $\Omega^n$.
	Thus, we have 
	\begin{align}
	 	P\left({\mathcal W}_{2,\|\cdot\|_\infty}(L^n(\Y), \cL(Y)) - E[{\mathcal W}_{2,\|\cdot\|_\infty}(L^n(\Y), \cL(Y))]\ge\varepsilon \right)&\le e^{-C\varepsilon^2n},
	 	\label{eq:conce 1st}
	\end{align}
	from which we deduce \eqref{eq:concent 2 proc}.

	Lastly, we turn to the proof of \eqref{eq:concen path}.
	If $F$ does not depend on $z$, we do not need the change of measure to get \eqref{eq:alpha beta estimate}.
	In fact, a direct estimation yields
	\begin{align*}
		\tilde Y^i_t - Y_t^{i,n} = E\left[\int_t^T \left(F_u(\tilde Y^i_u, \cL(Y_u)) - F_u(Y^{i,n}_u, L^n(\Y_u)) \right) \,du \mid {\mathcal{F}}_t^n \right].
	\end{align*}
	By Lipschitz continuity of $F$ and Gronwall's inequality we have
	\begin{align}
		\label{eq:estim y to get sup}
		|\tilde Y^i_t - Y_t^{i,n}|^2 \le e^{2TL_F}E\Big[\int_t^T {\mathcal W}_2(L^n(\Y_u),\cL(Y_u)) \,du \mid {\mathcal{F}}_t^n \Big]^2.
	\end{align}
	Thus, it follows by triangle inequality and definition of Wasserstein distance with respect to the supremum norm that
	\begin{align*}
		{\mathcal W}^2_{2,||\cdot||_\infty}(L^n(\Y), \cL(Y)) &\le {\mathcal W}^2_{2,||\cdot||_\infty}(L^n(\Y), L^n(\tilde \Y)) + {\mathcal W}^2_{2,||\cdot||_\infty}(L^n(\tilde \Y), \cL(Y)) \\
				&\le \sup_{t\in [0,T]}\frac 1n \sum_{i=1}^n|\tilde Y^i_t - Y_t^{i,n}|^2 +  {\mathcal W}^2_{2,||\cdot||_\infty}(L^n(\tilde \Y), \cL(Y))\\
				&\le \sup_{t\in [0,T]}e^{2TL_F}E\Big[\int_0^T {\mathcal W}_2(L^n(\Y_u),\cL(Y_u))\,du \mid {\mathcal{F}}_t^n \Big]^2+ {\mathcal W}^2_{2,||\cdot||_\infty}(L^n(\tilde \Y), \cL(Y))
	\end{align*}
	where the last inequality follows from \eqref{eq:estim y to get sup}.
	Hence, it follows by Doob's maximal inequality that
	\begin{align*}
		E\Big[{\mathcal W}^2_{2,||\cdot||_\infty}(L^n(\Y), \cL(Y)) \Big] &\le e^{2TL_F}TE\Big[\int_0^T {\mathcal W}_2^2(L^n(\Y_u),\cL(Y_u))\,du \Big] + E\Big[ {\mathcal W}^2_{2,||\cdot||_\infty}(L^n(\tilde \Y), \cL(Y)) \Big]\\
		&\le Cr_{n,m,k,2} + E\Big[ {\mathcal W}^2_{2,||\cdot||_\infty}(L^n(\tilde \Y), \cL(Y)) \Big],
	\end{align*}
	for some $q \in (p,k)$
	where the last inequality follows by Fubini's theorem and Theorem \ref{thm:mom bound}.
	Since $\tilde \Y = (\tilde Y^1, \dots, \tilde Y^n)$, are i.i.d., it holds $E\Big[ {\mathcal W}^2_{2,||\cdot||_\infty}(L^n(\tilde \Y), \cL(Y)) \Big] \to 0$ as $n$ goes to infinity. 
	Thus, there is and integer $\tilde n_0$ large enough such that for all $n\ge \tilde n_0$ we have $E\Big[ {\mathcal W}^2_{2,||\cdot||_\infty}(L^n(\tilde \Y), \cL(Y)) \Big]\le Cr_{n,m,k,2}$.
	Hence, we have
	\begin{equation}
	\label{eq:almost, 2nd}
		E\Big[{\mathcal W}^2_{2,||\cdot||_\infty}(L^n(\Y), \cL(Y)) \Big] \le Cr_{n,m,k,2}
	\end{equation}
	for $n\ge \tilde n_0$.
	Now, by \eqref{eq:conce 1st}, it holds that
	\begin{align*}
		&P\left({\mathcal W}_{2,||\cdot||_\infty}(L^n(\Y), \cL(Y) )\ge \varepsilon \right)\\
		&\le P\left({\mathcal W}_{2,\|\cdot\|_\infty}(L^n(\Y), \cL(Y)) - E[{\mathcal W}_{2,\|\cdot\|_\infty}(L^n(\Y), \cL(Y))]\ge\varepsilon /2\right)
		 + P\left(E\Big[{\mathcal W}_{2,||\cdot||_\infty}(L^n(\tilde \Y), \cL(Y))\Big] \ge \varepsilon/2 \right)\\
		 &\le e^{-C\varepsilon^2n} + P\left(E\Big[{\mathcal W}_{2,||\cdot||_\infty}(L^n(\tilde \Y), \cL(Y))\Big] \ge \varepsilon/2 \right).
	\end{align*}
	In view of \eqref{eq:almost, 2nd} and the fact that $r_{n,m,k,p}\downarrow 0$ as $n$ goes to infinity, we can choose $n_0 \ge \tilde n_0$ large enough such that for all $n \ge n_0$
	\begin{equation*}
		P\left(E\Big[{\mathcal W}_{2,||\cdot||_\infty}(L^n(\tilde \Y), \cL(Y))\Big] \ge \varepsilon/2 \right) = 0.
	\end{equation*}
	This concludes the proof of \eqref{eq:concen path}.
\end{proof}

\subsection{Proofs for Subsection \ref{sec:inter-bsde}}
We begin with the proof of Theorem \ref{thm:process conv}.
As we will see below, this result is obtained as a consequence of Theorem \ref{thm:mom bound}.
\begin{proof}[Proof of Theorem \ref{thm:process conv}]
Since $Y^{1,n}$ and $Y^1$ satisfy \eqref{eq:bsde system} and \eqref{eq:MkV BSDE} respectively, we have 
\begin{align*}
	Y^{1,n}_t - Y^1_t = E\left[\int_t^T \left( F_u(Y^{1,n}_u, Z^{1,n}_u, L^n(\Y_u)) -  F_u(Y^1_u,Z^1_u,\cL(Y_u)) \right) \,du \mid {\mathcal{F}}_t^n\right]
\end{align*}
so that by Lipschitz continuity of $F$, and Gronwall's inequality it holds
\begin{align*}
	|Y^{1,n}_t - Y^1_t| \le e^{L_FT}E\left[\int_t^T L_F  \left({\mathcal W}_2(L^n(\Y_u), \cL(Y_u)) + | Z^{1,n}_u - Z^1_u| \right)\,du \mid {\mathcal{F}}^n_t\right].
\end{align*}
Therefore, it follows by Doob's maximal inequality that
\begin{align}
\nonumber
	E\left[\sup_{t \in [0,T]}|Y^{1,n}_t - Y^1_t|^2 \right] 
	&\le L_F^2 e^{2L_FT}E\left[\sup_{t \in [0,T]}E\left[\int_0^T\left({\mathcal W}_2(L^n(\Y_u), \cL(Y_u)) + | Z^{1,n}_u - Z^1_u| \right)\,du \mid {\mathcal{F}}_t^n\right]^2 \right]
	\\
	\notag
	&\le L_F^2 e^{2L_FT}E\left[\left(\int_0^T\left({\mathcal W}_2(L^n(\Y_u), \cL(Y_u)) + | Z^{1,n}_u - Z^1_u| \right)\,du \right)^2\right]
	\\
	\label{eq:estim sup Y}
	&\le 2 L_F^2 T  e^{2L_FT} \left(\int_0^TE\left[{\mathcal W}_2^2(L^n(\Y_u), \cL(Y_u))\right]\,du + E\left[\int_0^T| Z^{1,n}_u - Z^1_u|^2\,du \right] \right).
\end{align}
On the other hand, applying It\^{o}'s formula to the process $| Y^{1,n}_t-Y^1_t|^2$, we have
	\begin{align*}
		| Y^{1,n}_t-  Y_t^1|^2& = -2\sum_{j=1}^n\int_t^T(Y^{1,n}_s-Y^1_s)( Z^{1,j,n}_s-\delta_{1j}Z^1_s)dW^j_s
		\\
		&\quad -\sum_{j,l=1}^n\int_t^T (Z^{1,j,n}_s-\delta_{1j}Z^1_s)(Z^{1,l,n}_s- \delta_{1l}Z^1_s)d\langle W^j, W^l\rangle_s
		\\
		&\quad+2\int_t^T( Y^{1,n}_s-Y^1_s)\left\{F_s(Y^{1,n}_s,Z^{1,1,n}_s, L^n(\Y_s)) - F_s( Y^1_s, Z^1_s,\cL(Y^1_s))\right\}ds.
	\end{align*}
	By Lipschitz continuity of $F$ we then have
	\begin{align}
	\notag
		&| Y^{1,n}_t-Y_t^1|^2+\sum_{j,l=1}^n\int_t^T (Z^{1,j,n}_s-\delta_{1j}Z^1_s)(Z^{1,l,n}_s-\delta_{1l}Z^1_s)d\langle W^j, W^l\rangle_s\\\notag
		&\quad
		\leq  -2\sum_{j=1}^n\int_t^T( Y^{1,n}_s-Y^1_s)(Z^{1,j,n}_s-\delta_{1j}Z^1_s)dW^j_s\\\notag
		&\quad +2\int_t^TL_F\abs{ Y^{1,n}_s-Y^1_s}\abs{ Z^{1,1,n}_s-Z^1_s}ds 
		+2\int_t^TL_F\abs{Y^{1,n}_s-Y^{1}_s}^2ds\\\notag
		&\quad +2\int_t^TL_F\abs{ Y^{1,n}_s - Y^1_s}{\mathcal W}_2(L^n(\Y_s),\cL(Y^1_s))\,ds\\\notag
		&\le  -2\sum_{j=1}^n\int_t^T( Y^{1,n}_s-Y^1_s)(Z^{1,j,n}_s-\delta_{1j}Z^1_s)dW^j_s+\int_t^T\frac{L_F}{\alpha}\abs{ Z^{1,1,n}_s-Z^1_s}^2ds\\
		\label{eq:estim y&z}
		&\quad 
		+\int_t^T (3\alpha + L_F) \abs{ Y^{1,n}_s-Y^{1}_s}^2ds+\int_t^TL_F{\mathcal W}^2_2(L^n(\Y_s),\cL(Y^1_s))\,ds,
	\end{align}
	where the last inequality follows by Young's inequality with some constant $\alpha>0$.
	Choosing $\alpha=L_F+1$, we obtain
	\begin{align*}
		|Y^{1,n}_t - Y^1_t|^2 \le E\left[ \int_t^T (4 + L_F) L_F | Y^{1,n}_s-Y^{1}_s|^2ds+\int_t^TL_F{\mathcal W}^2_2(L^n(\Y_s),\cL(Y^1_s))\,ds \mid {\mathcal{F}}_t^n \right] 
	\end{align*}
	so that by Gronwall's inequality we have that
	\begin{equation}
	\label{eq:first estim Y1}
		|Y^{1,n}_t - Y^1_t|^2 \le e^{ (4 + L_F) L_F T}E\left[  \int_t^TL_F{\mathcal W}^2_2(L^n(\Y_u),\cL(Y_u^1))\,du \mid {\mathcal{F}}_t^n \right] .
	\end{equation}
	Now, integrating on both sides yields
	\begin{align}
	\nonumber
		E\left[ |Y^{1,n}_t - Y^1_t|^2 \right] &\le e^{ (4 + L_F) L_F T}E\left[  \int_0^TL_F{\mathcal W}^2_2(L^n(\Y_u),\cL(Y^1_u))\,du  \right]\\
		\label{eq:estim y by w22}
		&\le e^{ (4 + L_F) L_F T}L_F \int_0^TE\left[{\mathcal W}^2_2(L^n(\Y_u),\cL(Y^1_u))  \right]\,du
	\end{align}
	from which we derive, due to Theorem \ref{thm:mom bound}, that
	\begin{equation}
	\label{eq:diff for y}
		E\left[ |Y^{1,n}_t - Y^1_t|^2 \right]\le e^{ (4 + L_F) L_F T}L_FTC \, r_{n,m,q,2}
	\end{equation}
	with $q \in (2,k)$.

	To get the convergence estimate for the control processes $Z^{1,n}$, notice that by \eqref{eq:estim y&z} (with the choice $\alpha=L_F+1$), we have
	\begin{align}
	\nonumber
		E\left[ \int_0^T|Z^{1,n}_u - Z^1_u|^2\,du \right] &\le (L_F+1) E\left[  (4 + L_F) L_F \int_0^T|Y^{1,n}_u - Y^1_u|^2\,du + L_F\int_0^T{\mathcal W}^2_2(L^n(\Y_u),\cL(Y^1_u))\,du \right]
		\\\notag
		&\le (L_F+1) \Big(  (4 + L_F) L_F \int_0^TE\left[|Y^{1,n}_u - Y_u^1|^2\right]\,du 
		\\
		&\quad+ L_F\int_0^TE\left[{\mathcal W}^2_2(L^n(\Y_u),\cL(Y_u^1)) \right]\,du \Big).
		\label{eq: estim z by W22}
	\end{align}
	Now combine this with the inequalities \eqref{eq:estim sup Y}, \eqref{eq:diff for y} and 
	 Theorem \ref{thm:mom bound} to conclude.
\end{proof}
We now turn to the particular case of systems with linear interaction.
Unlike Theorem \ref{thm:process conv}, the proof of Proposition \ref{prop:lin inter} does not follow from the convergence of the empirical measure of $\Y$, but by a direct argument which seems tailor-made for ``linear interaction functions''.
Before presenting the proof, let us justify the well-posedness of the system.
\begin{proof}[Proof of Remark \ref{rem:lin syst exists}]
	The existence and uniqueness of square integrable solutions follows, as in the non-linear interaction case, from \cite{karoui01} for the system~\eqref{eq:gen lin inter} and \cite[Theorem 4.23]{MR3752669} for the McKean-Vlasov BSDE~\eqref{eq:lin inter McKVl}. 
	It suffices to show that the generators are Lipschitz continuous (with Lipschitz constant independent of $n$).
	We give only the argument for the McKean-Vlasov equation. 
	Let $y,y' \in \mathbb{R}^m$, $z, z' \in \mathbb{R}^{m \times d}$ and $\mu, \mu'\in \cP(\mathbb{R}^m)$. By (Lip) we have
	\begin{align*}
		|F(y,z,\mu) - F(y',z' ,\mu') | &\le L_F\Big(|y-y'| + |z-z'| + |\int_{\mathbb{R}^m}f(y,a,z)\,\mu(da) - \int_{\mathbb{R}^m}f(y',a,z')\,\mu'(da) | \Big)
		\\
					&\le L_F\Big(|y-y'| + |z-z'| + L_f \left |\int_{\mathbb{R}^m}\frac{1}{L_f}f(y,a,z)\,\mu(da) - \int_{\mathbb{R}^m}\frac{1}{L_f}f(y,a,z)\,\mu'(da) \right|
					\\
					&\quad  + \left| \int_{\mathbb{R}^m}f(y,a,z)\,\mu'(da) - \int_{\mathbb{R}^m}f(y',a,z')\,\mu'(da) \right| \Big)
					\\
					&\le L_F\Big(|y-y'| + |z- z'| + L_f\cW_1(\mu, \mu') + L_f\int_{\mathbb{R}^m} (|y-y'| + |z-z'|) \,\mu(da) \Big)
					\\
					&\le \max(L_F, L_F L_f, L_f) \Big(|y-y'| + |z- z'| + \cW_2(\mu, \mu')  \Big).
	\end{align*}
	 To derive the penultimate inequality, we used Kantorovich-Rubinstein duality formula.
\end{proof}

\begin{proof}[Proof of Proposition \ref{prop:lin inter}]
It follows by application of It\^o's formula that
\begin{align}
	\notag
		&| Y^{1,n}_t-Y_t^1|^2+\sum_{j,l=1}^n\int_t^T (Z^{1,j,n}_s- \delta_{1j}Z^1_s)(Z^{1,l,n}_s-\delta_{1l}Z^1_s)d\langle W^j, W^l\rangle_u
		\leq 
		\\
		\notag
		& -2\sum_{j=1}^n\int_t^T( Y^{1,n}_u-Y^1_u)(Z^{1,j,n}_u- \delta_{1,j} Z^1_u)dW^j_u
		+2\int_t^T(Y^{1,n}_u-Y^1_u)\left( F_u\Big(Y^{1,n}_u, Z^{1,n}_u,\frac1n \sum_{j=1}^nf_u(Y^{1,n}_u,  Y^{j,n}_u,Z^{1,n}_u)\Big)\right.
		\\\notag
		&\quad  -  F_u\Big(Y^1_u, Z^1_u,\left.\int_{\mathbb{R}^m} f_u(Y^1_u, y,Z^1_u) \, d\cL(Y^1_u)(y)\Big)\right)du
		\\\notag
		 &\le-2\sum_{j=1}^n\int_t^T( Y^{1,n}_u-Y^1_u)(Z^{1,j,n}_u-\delta_{1j}Z^1_u)dW^j_u + L_F\int_t^T \frac{1}{\alpha}|Z^{1,n}_u - Z^1_u |^2\,du
		+L_F\int_t^T (2\alpha+2)  |Y^{1,n}_u-Y^1_u|^2\,du
		\\
		&\quad + L_F \int_t^T\frac{1}{\alpha}\left|\frac1n \sum_{j=1}^nf_u(Y^{1,n}_u,  Y^{j,n}_u, Z^{1,n})  - \int_{\mathbb{R}^m} f_u(Y^1_u, y,Z^{1}_u)\, d\cL(Y^1_u)(y)\right|^2du
		\label{eq:diff y and z lin}
	\end{align}
	where the last inequality follows by Young's inequality with some constant $\alpha>0$.
	Let us analyze the last term above.
	It follows by triangle inequality that
	\begin{align*}
		&\left|\frac1n \sum_{j=1}^nf_u(Y^{1,n}_u,  Y^{j,n}_u, Z^{1,n})  - \int_{\mathbb{R}^m} f_u(Y^1_u, y,Z^{1}_u)\, d\cL(Y^1_u)(y)\right|^2\\
		&\le 2\left|\frac1n \sum_{j=1}^n\left\{f_u(Y^{1,n}_u, Y^{j,n}_u,Z^{1,n}_u) - f_u(Y^{1}_u, Y^{j,n}_u, Z^{1,n}_u)\right\}\right|^2\\
		&\quad +4\left|\frac1n \sum_{j=1}^n\left(f_u(Y^{1}_u, Y^{j,n}_u,Z^{1,n}_u) - f_u(Y^{1}_u, Y^{j}_u, Z^{1,n}_u)\right)\right|^2
		 + 8\left|\frac1n \sum_{j=1}^n\left(f_u(Y^{1}_u, Y^{j}_u,Z^{1,n}_u) - f_u(Y^{1}_u, Y^{j}_u, Z^{1}_u)\right)\right|^2\\
		&\quad +8 \left|\frac1n \sum_{j=1}^n\left(f_u(Y^{1}_u,  Y^{j}_u,Z^{1}_u)  - \int_{\mathbb{R}^m} f_u(Y^1_u, y,Z^1_u)\, d\cL(Y^1_u)(y)\right)\right|^2\\
		&\le L_f\left\{2 |Y^{1,n}_u - Y^1_u|^2+ 8 |Z^{1,n}_u - Z^1_u|^2\right\} + 4 L_f\frac1n\sum_{j=1}^n|Y^{j,n}_u - Y^j_u|^2\\
		&\quad
		 + \frac{8}{n^2}\left| \sum_{j=1}^n \left[f_u(Y^{1}_u,  Y^{j}_u, Z^1_u)  - \int_{\mathbb{R}^m} f_u(Y^1_u, y,Z^1_u)\, d\cL(Y^1_u)(y) \right] \right|^2.
	\end{align*}
	Coming back to \eqref{eq:diff y and z lin}, this allows to obtain, after taking conditional expectation with respect to the sigma algebra ${\mathcal{F}}_t^n$,
	\begin{align*}
		&| Y^{1,n}_t-Y_t^1|^2+E\left[\int_t^T|Z^{1,n}_s-Z^1_s|^2ds\mid{\mathcal{F}}_t^n\right]\\
		&\quad \le E\left[\int_t^T \left\{ (L_F(2\alpha +2)+ \frac{2L_f}{\alpha})|Y^{1,n}_u - Y^1_u|^2+ \frac{8L_f+L_F}{\alpha}|Z^{1,n}_u - Z^1_u|^2 \right\}\,du \mid{\mathcal{F}}_t^n\right]\\
		&\qquad + \frac{4L_f}{\alpha}E\left[\frac1n\sum_{j=1}^n\int_t^T|Y^{j,n}_u - Y^j_u|^2\,du\mid {\mathcal{F}}_t^n\right]\\
		&\qquad+ \frac{8}{\alpha n^2}E\left[\int_t^T\left| \sum_{j=1}^n \left[ f_u(Y^{1}_u,  Y^{j}_u,Z^1_u)  - \int_{\mathbb{R}^m} f_u(Y^1_u, y,Z^1_u)\, d\cL(Y^1_u)(y)\right|^2 \right] \,du\mid{\mathcal{F}}_t^n\right].
	\end{align*}
	Choose $\alpha = 8L_f+L_F+1$.
	Then, we have
	\begin{align}
	\notag
		&\alpha| Y^{1,n}_t-Y_t^1|^2+E\left[\int_t^T|Z^{1,n}_s-Z^1_s|^2ds\mid{\mathcal{F}}_t^n\right] 
		\\\notag
		&\le \alpha E\left[\int_t^T\left\{ (L_F (2\alpha+2)  +1)|Y^{1,n}_u - Y^1_u|^2 \right\}\,du \mid{\mathcal{F}}_t^n\right]
		\\\notag
		& \quad+ 4 L_fE\left[\frac1n\sum_{j=1}^n\int_t^T|Y^{j,n}_u - Y^j_u|^2\,du\mid {\mathcal{F}}_t^n\right]
		\\
		&\quad + \frac{8}{ n^2}E\left[\int_t^T\left| \sum_{j=1}^n \left[ f_u(Y^{1}_u,  Y^{j}_u,Z^1_u)  - \int_{\mathbb{R}^m} f_u(Y^1_u, y,Z^1_u)\, d\cL(Y^1_u)(y)\right] \right|^2\,du\mid{\mathcal{F}}_t^n\right]
		 \label{eq:diff y and z lin second}.
	\end{align}

	Since $E[|Y^{1,n}_t - Y^1_t|^2] = E[|Y^{j,n}_t-Y^j_t|^2]$ for all $j$
	we can apply Gronwall's inequality to get 
	\begin{align*}
		|Y^{1,n}_t - Y_t^1|^2 &\le \frac{C_{T,F,f}}{n^2}E\left[\int_0^T\left| \sum_{j=1}^n \left[ f_u(Y^{1}_u,  Y^{j}_u,Z^1_u)  - \int_{\mathbb{R}^m} f_u(Y^1_u, y,Z^1_u)\, d\cL(Y^1_u)(y) \right] \right|^2\,du\mid{\mathcal{F}}_t^n\right]
	\end{align*}
	for a constant $C_{T,F,f}$ depending only on $T,L_f$ and $L_F$.
	Now, denoting by $f^\ell$ the $\ell^{th}$ component of $f$ and using 
	 that $Y^i_u$ and $Y^j_u$ have the same law we deduce that 
	\begin{align*}
		E\left[|Y^{1,n}_t - Y_t^1|^2\right]&\le \frac{C_{T,F,f}}{n^2}E\left[\int_0^T\left| \sum_{j=1}^n \left[ f_u(Y^{1}_u,  Y^{j}_u,Z^1_u)  - \int_{\mathbb{R}^m} f_u(Y^1_u, y,Z^1_u)\, d\cL(Y^1_u)(y) \right] \right|^2\,du\right]
		\\
		 &\le \frac{C_{T,F,f}}{n^2}\int_0^TE\Bigg[\sum_{\ell=1}^m \sum_{j,k=1}^n\Big(f^\ell_u(Y^{1}_u,  Y^{j}_u,Z^1_u)  - \int_{\mathbb{R}^m} f^\ell_u(Y^1_u, y,Z^1_u)\, d\cL(Y^1_u)(y)\Big)
		 \\
		 &\quad \times\Big(f^\ell_u(Y^{1}_u,  Y^{k}_u,Z^1_u)  - \int_{\mathbb{R}^m} f^\ell_u(Y^1_u, y,Z^1_u)\, d\cL(Y^1_u)(y)\Big)\Bigg]\,du
		 \\
		&\le \frac{C_{T,F,f}}{n^2}\int_0^T\sum_{j=1}^nE\left[\left|f_u(Y^{1}_u,  Y^{j}_u,Z^1_u)  - \int_{\mathbb{R}^m} f_u(Y^1_u, y,Z^1_u)\, d\cL(Y^1_u)(y)\right|^2\right]
		\\
		&= \frac{C_{T,F,f}}{n^2}\int_0^T\sum_{j=1}^nE\left[\text{Var}(f_u(Y^1_u,Y_u^j,Z^1_u)) \right]\,du.
	\end{align*}
	The equality before the last one follows from the fact that $Y^i_t$, $j=1,\dots, n$ are i.i.e. for all $t$. 
	Since the law $\cL(Y^1_u)$ of $Y^1_u$ satisfies Talagrand's $T_2$ inequality with a constant $C_{F,f,G,T}$ which depends only on the Lipschitz constants of $F,f$ and $G$ and of $T$ (and which does not depend on the dimensions) (see \cite[Theorem 1.3]{T2bsde}) it follows e.g. by \cite[Theorem 22.17]{Vil2} (see also \cite[Section 7]{Otto-Villani}) that $\cL(Y^1_u)$ satisfies the Poincar\'e inequality with the same constant.
	That is, it holds that
	\begin{equation*}
		\text{Var}(f_u(x,Y_u^j,z)) \le C_{F,f,G,T}\int_{\mathbb{R}^m}|\partial_yf_u(x,y,z)|^2\,d\cL(Y^j_u)(y)\quad \text{for all } x,z \text{ fixed}.
	\end{equation*}
	Since $f$ is Lipschitz continuous, $L_f^2$ is an upper bound for the integral in the right hand side above (uniformly in $x,z$).
	Therefore, we have 
	\begin{align*}
		E\left[|Y^{1,n}_t - Y_t^1|^2\right] &\le \sum_{j=1}^n\frac{C}{n^2}\int_0^TE\left[\int_{\mathbb{R}^m}|\partial_yf_u(Y^j_u,y,Z^1_u)|^2\, d\cL(Y^j_u)(y)\right]\,du\\
			&\le \frac{C}{n}
	\end{align*}
for some constant $C$ depending on $T, L_F$ $L_f$ and the Lipschitz constant of $G$.

Now, showing that $E[\int_0^T|Z^{1,n}_u - Z^1_u|^2\,du]\le C/n$ follows by \eqref{eq:diff y and z lin second}.
In fact, that inequality implies
\begin{align*}
	&E\left[\int_0^T|Z^{1,n}_s-Z^1_s|^2ds\right]
		\leq  \int_0^TCE\left[|Y^{1,n}_s-Y^1_s|^2\right]\,ds\\
		&\quad + \frac{8}{n^2}E\left[\int_0^T\left|\frac1n \sum_{j=1}^nf_u(Y^{1,n}_u,  Y^{j,n}_u, Z^{1,n})  - \int_{\mathbb{R}^m} f_u(Y^1_u, y,Z^{1}_u)\,d\cL(Y^1_u)(y)\right|^2ds\right].
	\end{align*}
	We have just seen that, up to the factor $CT$ (for some constant $C>0$),
	the first term is smaller than the second one, which in turn is less than $C/n$ for some constant $C>0$.
	This completes the proof.
\end{proof}

\subsection{Proof of Theorem \ref{thm:lim PDE}}

\begin{proof}[Proof of Theorem \ref{thm:lim PDE}]
	First note that the PDE \eqref{eq:PDE n system} admits a (classical) solution.
	Assumption (PDE1) says that $B$ is differentiable on $\cP_2(\RR^d)$.
	Thus, it follows by \cite[Proposition 5.35]{MR3752669} that, for every $x \in \RR^d$, the projection of $B$ on $\RR^d\times(\RR^d)^n$ given by $ (x, \x) \mapsto \bB(x,\x) := B(x,L^n(\x))$ is again differentiable, with
	\begin{equation}
	\label{eq:War diff to Euc diff}
		\partial_{x_i}\bB(\cdot, \x)
		 = \frac1n \partial_\mu B(\cdot, L^n(\x))(x_i), \quad i=1,\dots, n.
	\end{equation}

	One similarly shows that the respective projections $\bsigma$, $\bF$ and $\bG$ of $\sigma$, $F$ and $G$ on finite dimensional spaces (with appropriate dimensions) are three times differentiable, and by the identity \eqref{eq:War diff to Euc diff} the derivatives of first and second order are bounded by $L_F$.
	In particular, the bound does not depend on $n$.
	Therefore, it follows from \cite[Theorem 3.2]{PardouxPeng92} that the PDE \eqref{eq:PDE n system} admits a solution $v^{i,n}:[0,T]\times (\RR^d)^n\to \RR^m$ with the probabilistic representation
		\begin{equation*}
		v^{i,n}(s,X^{1,n,t,x_1}_s, \dots,X^{n,n,t,x_n}_s) = Y^{i,n,t,\x}_s\quad \text{for all}\quad s\ge t,\,\, (t,\x)\in [0,T]\times (\RR^d)^n,
	\end{equation*}
	where $(X^{i,n,t,x_i},Y^{i,n,t,\x},Z^{i,n,t,\x})_{i=1,\dots,n, t \ge 0}$ solves the decoupled FBSDE
	\begin{equation}
	\label{eq:n particles fbsde}
		\begin{cases}
			X^{i,n,t,x_i}_s = x_i + \int_t^s B(X^{i,n,t,x_i}_u, L^n(\X^{t, \x}_u))\,du + \int_t^s\sigma(X^{i,n,t,x_i}_u,L^n(\X^{t, \x}_u))\,dW_u\\
			Y^{i,n,t,\x}_s = G(X^{i,n,t,x_i}_T,L^n(\X^{t, \x}_T)) + \int_s^T F(X^{i,n,t,x_i}_u, Y^{i,n,t,\x}_u, Z^{i,n,t,\x}_u,L^n(\X^{t,\x}_u), L^n(\Y^{t,\x}_u) )\,du\\
			\quad - \int_s^TZ^{i,n,t,\x}_u\,dW_u\\
			i= 1,\dots,n
		\end{cases}
	\end{equation}
	with $\X^{t,\x}:=(X^{1,n,t,x_1}_u, \dots, X^{n,n,t,x_n}_u)$ and $\Y^{t,\x}: = (Y^{1,n,t,\x}_u,\dots, Y^{n,n,t,\x}_u)$.
	In particular $v^{i,n}(t,\x) = Y^{i,n,t,\x}_t$.
	Naturally, it follows by standard SDE and BSDE theories (see e.g. \cite{PP90}) that the system \eqref{eq:n particles fbsde} is well-posed, since by (PDE1) and the identity \eqref{eq:War diff to Euc diff}, the functions $\bB,\bsigma$ and $\bF$ are Lipschitz continuous (recall they are defined on finite dimensional spaces).

	On the other hand, the PDE \eqref{eq:master pde} is connected to the following decoupled McKean-Vlasov FBSDE:
	\begin{equation}
	\label{eq:master fbsde}
		\begin{cases}
			X^{t,\xi}_s = \xi + \int_t^s B(X^{t,\xi}_u,\cL(X_u^{t,\xi}))\,du + \int_t^s\sigma(X^{t,\xi}_u,\cL(X^{t, \xi}_u))\,dW^1_u\\
			Y^{t,\xi}_s = G(X^{t,\xi}_T, \cL(X^{t,\xi}_T)) + \int_s^T F(X^{t,\xi}_u, Y^{t,\xi}_u, Z^{t,\xi}_u, \cL(X^{t,\xi}_u), \cL(Y^{t,\xi}_u))\,du - \int_s^TZ^{t,\xi}_u\,dW_u,
		\end{cases}
	\end{equation}
	whose solution is the triple $(X^{t,\xi}, Y^{t,\xi}, Z^{t,\xi})_t$, with $\xi \in L^2(\Omega,{\mathcal{F}}_t, P; \RR^d)$ fixed.
	Since $B$ is differentiable on $\cP_2(\RR^d)$, it follows by definition of $\partial_\mu B$ that for every $\mu, \mu' \in \cP_2(\RR^d)$, one has
	\begin{equation*}
		B(x, \mu) - B(x, \mu') = \int_0^1\int_{\RR^d}\partial_\mu B(x, \lambda\mu + (1-\lambda)\mu')(y)(\mu - \mu')(dy)d\lambda.
	\end{equation*}
	Since $y\mapsto \partial_\mu B(x, \mu)(y)$ is $L_F$-Lipschitz continuous, it follows by Kantorovich-Rubinstein formula that 
	\begin{equation*}
		|B(x, \mu) - B(x,\mu')| \le \int_0^1L_F\cW_1(\mu, \mu') \,d\lambda = L_F\cW_2(\mu, \mu').
	\end{equation*}
	That is, $B$ is Lipschitz continuous. 
	One similarly shows that the functions $\sigma,F$ and $G$ are Lipschitz continuous on their respective domain.
	Thus, the equation \eqref{eq:master fbsde} is well-posed, see e.g. \cite{MR1108185} and \cite{MR3752669}.

	By \cite[Proposition 5.2]{ChassagneuxCrisanDelarue_Master}, it holds that
\begin{equation*}
	V(s,X^{t,\xi}_s, \cL(X_s^{t,\xi})) = Y^{t,\xi}_s
\end{equation*}
for all $0\le t\le s\le T$ and $\xi \in L^2(\Omega,{\mathcal{F}}_t, P; \RR^d)$.
In particular, $V(t, \xi, \mu) =  Y^{t,\xi}_t$ for all $(t,\xi,\mu)\in [0,T]\times L^2(\Omega,{\mathcal{F}}_t, P; \RR^d)\times {\mathcal{P}}_2(\RR^d)$ with $\cL(\xi) = \mu$.

Let $\xi_i$, $i = 1, \dots, n,$ be i.i.d. random variables in $L^2(\Omega, {\mathcal{F}}_t, P; \RR^d)$ with common law $\mu$ and denote $\boldsymbol{\xi}:=(\xi_1,\dots, \xi_n)$. 
Then we have 
\begin{align}
\label{eq:estim V by Y}
	E\Big[ \sup_{t \in [0,T]}| v^{1,n}(t,\xi_1, \dots,\xi_n) - V(t,\xi_1, \cL(\xi^1)) |^2 \Big]
	 = E\Big[\sup_{t \in[0,T]} | Y^{1,n,t,\boldsymbol{\xi}}_t - Y^{t,\xi_1}_t |^2 \Big].
\end{align}
To conclude, it remains to estimate the convergence rate of the right hand side of \eqref{eq:estim V by Y}.
This can be done as in the proof of Theorem \ref{thm:process conv}.
The only difference here being that the generator and terminal condition of the $n$-particle system ``depends on $(i,n)$'' through the processes $X^{i,n,t,\xi_i}$ and their empirical measure.
But thanks to the Lipschitz continuity property of $F$ and $G$ (proved above) this does not cause much problems.
One can follow the estimations in the proofs of Lemma \ref{lem: fundamental lemma} and Theorem \ref{thm:process conv}, adding the terms
\begin{equation*}
		\delta_s^n:= |X^{1,n,t,\xi_1}_s - X^{t,\xi_1}_s|^2 \quad \text{and}\quad \eta^n_s := {\mathcal W}^2_2(L^n(\X_s^{t,\boldsymbol \xi_1}), \cL(X_s^{t,\xi_1})) .
\end{equation*}
 In fact, using the arguments leading to Equation \eqref{eq:estim sup Y}, Equation \eqref{eq:estim y by w22} and Equation \eqref{eq: estim z by W22} respectively, we obtain 
	\begin{align*}
		E\Big[\sup_{t\in [0,T]}|Y^{1,n,t,\boldsymbol\xi}_t -  Y^{t,\xi_1}_t|^2\Big] &\le 16L_F^2e^{2L_FT}E\Big[\int_0^T    \left( \delta^n_u + \eta^n_u + {\mathcal W}_2^2(L^n(\Y_u^{t,\boldsymbol\xi}), \cL(Y_u^{t,\xi})) \right) \,du\\
		&\quad +\int_0^T|Z^{1,n,t,\boldsymbol\xi}_u -  Z^{t,\xi}_u|^2\,du  \Big]  + 2L_G^2  E\Big[\delta^n_T + \eta^n_T\Big],
	\end{align*}
	\begin{align}
	\notag
		E\left[ |Y^{1,n,t,\boldsymbol\xi}_s -  Y^{t,\xi}_s|^2 \right] 
		&\le e^{ (4 + L_F) L_F T} \left(L_GE\Big[\delta^n_T + \eta^n_T\Big] + L_F  E\Big[\int_0^T \left( \delta^n_u +\eta^n_u+ {\mathcal W}^2_2(L^n(\Y_u^{t,\boldsymbol\xi}),\cL(Y_u^{t,\xi})) \right) \,du \Big] \right)
		\\\notag
		&\le e^{ (4 + L_F) L_F T}(TL_F\vee L_G\vee L_F)\Bigg(E\Big[\sup_{s\in[0,T]}\delta^n_{s} +\sup_{s\in [0,T]}\eta^n_s \Big]\\
		&\quad + E\Big[\int_0^T{\mathcal W}^2_2(L^n(\Y_u^{t,\boldsymbol\xi}),\cL(Y_u^{t,\xi})) \,du \Big] \Bigg)
	\end{align}
	and
	\begin{align*}
		E\Big[ \int_0^T|Z^{1,n,t,\boldsymbol\xi}_u -  Z^{t,\xi}_u|^2\,du \Big] 
		&\le  (L_F+1) \Bigg(L_GE\Big[\sup_{s \in [0,T]}\delta^n_s + \sup_{s \in [0,T]} \eta^n_s \Big] + \int_0^T  (5 + L_F )L_FE\Big[|Y^{1,n,t,\boldsymbol\xi}_u - Y^{t,\xi}_u|^2\Big]\,du\\
		&\quad + L_F E\Big[\int_0^T {\mathcal W}^2_2(L^n(\Y_u^{t,\boldsymbol\xi}), \cL(Y_u^{t,\xi})) \,du\Big]  \Bigg).
	\end{align*}
	Therefore, there is a constant $C_{L_F,T}$ depending only on $L_F$ and $T$ such that
	 \begin{equation}
	\label{eq:forward sde}
		E\Big[\sup_{t\in [0,T]}|Y^{1,n,t,\boldsymbol\xi}_t - Y^{t,\xi}_t|^2\Big] 
		\le 
		C_{L_F,T} \Big(E\Big[\sup_{s \in [0,T]}\delta^n_s + \sup_{s \in [0,T]}\eta^n_s\Big] + \int_0^T E\Big[{\mathcal W}^2_2(L^n(\Y_u^{t,\boldsymbol\xi}), \cL(Y_u^{t,\xi})) \Big]\,du \Big).
	\end{equation}
	Moreover, by the theory of (forward) propagation of chaos, see e.g.~\cite[Theorem 2.12]{MR3753660}, it holds that
	\begin{equation}
	\label{eq:classic chaos}
		E\Big[ \sup_{s \in [0,T]}\delta^n_s + \sup_{s \in [0,T]}\eta^n_s\Big] \le C \varepsilon_n.
	\end{equation}
	It remains to estimate $E\big[{\mathcal W}^2_2(L^n(\Y_u^{t,\bxi}), \cL(Y_u^{t,\xi})) \big]$.
	This is done as follows.
	We apply It\^o's formula to $|Y^{i,n,t,\bxi}_s - Y^{t, \xi}_s|^2$.
	This yields, thanks to the Lipschitz continuity of $F$ and $G$,
	\begin{align*}
		&|Y^{i,n,t,\boldsymbol\xi}_s - Y^{t, \xi}_s|^2 \le L_F\left(\delta^n_T + \eta^n_T \right) + L_F\int_s^T \Big[ \Big(\delta^n_u + \eta^n_u + |Z^{i,n,t,\boldsymbol\xi}_u - Z^{t,\xi}_u|+ |Y^{i,n,t,\boldsymbol\xi}_u - Y^{t, \xi}_u|
		\\
		&\quad + \cW_2(L^n(\Y^{t,\boldsymbol\xi}_u), \cL(Y_u^{t,\xi}))\Big)|Y^{i,n,t,\boldsymbol\xi}_u - Y^{t, \xi}_u| - \frac12 |Z^{i,n,t,\boldsymbol\xi}_u - Z^{t, \xi}_u|^2 \Big]\,du
		\\
		&\quad -\int_s^T(Y^{i,n,t,\boldsymbol\xi}_u - Y^{t, \xi}_u)(Z^{i,n,t,\boldsymbol\xi}_u - Z^{t, \xi}_u)\,dW_u.
	\end{align*}
	Thus, using sucessively Young's inequality with a constant $\alpha>0$ and Gronwall's inequality we have
	\begin{align*}
		&|Y^{i,n,t,\boldsymbol\xi}_s - Y^{t, \xi}_s|^2 \le C_{L_F, \alpha,T}E\Big[\left(\delta^n_T + \eta^n_T \right)
		+  \int_s^T\Big(\delta^n_u + \eta^n_u  + \cW_2^2(L^n(\Y^{t,\boldsymbol\xi}_u), \cL(Y_u^{t,\xi}))\Big)\\
		&\quad + \Big(\frac{1}{2\alpha}- \frac12\Big) |Z^{i,n,t,\boldsymbol\xi}_u - Z^{t, \xi}_u|^2\,du \mid {\mathcal{F}}_s \Big]\\
		&\le C_{L_F,T}E\Big[\sup_{s\in[0,T]}\delta^n_s + \sup_{s \in [0,T]}\eta^n_s 
		+  \int_0^T \cW_2^2(L^n(\Y^{t,\boldsymbol\xi}_u), \cL(Y_u^{t,\xi}))\mid {\mathcal{F}}_s \Big],
	\end{align*}
	where the second inequality follows by choosing $\alpha>1$ and for some constant $C_{L_F,T}>0$.
	Since 
	\begin{equation*}
		\cW_2^2(L^n(\Y^{t,\xi}_u), \cL(Y_u^{t,\xi})) \le \frac1n\sum_{i=1}^n|Y^{i,n,t,\boldsymbol\xi}_u - Y^{t,\xi}_u|^2,
	\end{equation*}
	we obtain by Gronwall's inequality that
	\begin{equation*}
		E\Big[\cW_2^2(L^n(\Y^{t,\boldsymbol\xi}_s), \cL(Y_s^{t,\xi}))\Big] \le C_{L_F,T}E\Big[\sup_{s\in[0,T]}\delta^n_s + \sup_{s \in [0,T]}\eta^n_s \Big]
	\end{equation*}
	and it thus follows from \eqref{eq:forward sde} and \eqref{eq:classic chaos} that
	\begin{equation*}
		E\Big[\sup_{t\in [0,T]}|Y^{1,n,t,\boldsymbol\xi}_t - Y^{t,\xi}_t|^2\Big] \le C_{L_F,T}\varepsilon_n
	\end{equation*}	
	for some constant $C_{L_F,T}>0$.
	Combining this with  \eqref{eq:estim V by Y} leads to~\eqref{eq:PDE linear inter}.

	To prove~\eqref{eq:PDE linear inter empi}, let $(\xi_1, \dots, \xi_n)$ be $n$ i.i.d. ${\mathcal{F}}_t$-measurable, square integrable random variables.
	It follows by triangle inequality that
	\begin{align*}
		 | v^{i,n}(t,\xi_1, \dots, \xi_n) - V(t, \xi_i, L^n(\boldsymbol\xi)) | &\le | v^{i,n}(t,\xi_1, \dots, \xi_n) - V(t, \xi_i, \cL(\xi_1) | + | V(t, \xi_i, \cL(\xi_1)) - V(t, \xi_i, L^n(\boldsymbol\xi)) |\\
		 &\le | v^{i,n}(t,\xi_1, \dots, \xi_n) - V(t, \xi_i, \cL(\xi_1) | + C\cW_2(\cL(\xi_1), L^n(\boldsymbol\xi)),
	\end{align*}
	where the second inequality follows by Lipschitz continuity of $V$ given in \cite[Proposition 5.2]{ChassagneuxCrisanDelarue_Master}.
	Therefore, we obtain by \eqref{eq:PDE linear inter} and \cite[Theorem 1]{Fou-Gui15} that
	\begin{equation*}
		E\left[	 | v^{i,n}(t,\xi_1, \dots, \xi_n) - V(t, \xi_i, L^n(\boldsymbol\xi)) |^2 \right] \le C_{L_F,T} \, (\varepsilon_n + r_{n,d,k,2}).
	\end{equation*}
	This concludes the proof.
\end{proof}

\bibliographystyle{abbrvnat}
\bibliography{./references-Concen_RM,./references-MFGsupp}

\vspace{1cm}

\end{document}